\newtheorem{thm}{Theorem}[section]
\newtheorem{cor}[thm]{Corollary}
\newtheorem{lem}[thm]{Lemma}
\newtheorem{prop}[thm]{Proposition}
\newtheorem{rem}[thm]{Remark}
\newcommand{\simp}{\triangle}
\begin{document}
 
  %\articletype{Research Article{\hfill}Open Access}
  \author{Steven N. Harding}
  \address{Iowa State University, Department of Mathematics, 411 Morrill Road, Ames, IA 50011, U.S.A.; E-mail: sharding@iastate.edu}
  \author{Alexander W. N. Riasanovsky}
  \address{Iowa State University, Department of Mathematics, 411 Morrill Road, Ames, IA 50011, U.S.A.; E-mail: awnr@iastate.edu}
  \title{Moments of the weighted Cantor measures}
  %\runningtitle{Moments of the weighted Cantor measures}
  %\subtitle{...}
  \begin{abstract}
{
Based on the seminal work of Hutchinson, we investigate properties of {\em $\alpha$-weighted Cantor measures} whose support is a fractal contained in the unit interval.  Here, $\alpha$ is a vector of nonnegative weights summing to $1$, and the corresponding weighted Cantor measure $\mu^\alpha$ is the unique Borel probability measure on $[0,1]$ satisfying $ \mu^\alpha(E) = \sum_{ n=0 }^{N-1} \alpha_n\mu^\alpha( \varphi_n^{-1}(E) )$ where $\varphi_n: x\mapsto (x+n)/N$.  In Sections \ref{sec intro} and \ref{sec properties} we examine several general properties of the measure $\mu^\alpha$ and the associated Legendre polynomials in $L_{\mu^\alpha}^2[0,1]$.  In Section \ref{sec moments}, we (1) compute the Laplacian and moment generating function of $\mu^\alpha$, (2) characterize precisely when the moments $I_m = \int_{[0,1]}x^m\,d\mu^\alpha$ exhibit either polynomial or exponential decay, and (3) describe an algorithm which estimates the first $m$ moments within uniform error $\varepsilon$ in $O( (\log\log(1/\varepsilon))\cdot m\log m )$.  We also state analogous results in the natural case where $\alpha$ is {\em palindromic} for the measure $\nu^{\alpha}$ attained by shifting $\mu^{\alpha}$ to $[-1/2,1/2]$.  
}
\end{abstract}
  \keywords{Cantor, moments, orthogonal polynomials, generating function, iterated function system}
  \subjclass[2010]{28A25, 28A80}
 % \communicated{...}
 % \dedication{...}
  
\maketitle
\section{Introduction}\label{sec intro}

In the seminal paper \cite{Hutch}, Hutchinson realized a fractal as the invariant compact set, called the \textit{attractor}, of an iterated function system (IFS), i.e. a family of contraction maps on a complete metric space. Specifically, given an IFS $\{\varphi_n\}_{n=0}^{N-1}$ on $X$, the attractor of the IFS is the unique compact set $K \subset X$ satisfying
\[
K = \bigcup_{n=0}^{N-1}\varphi_n(K).
\]
Hutchinson showed the existence and uniqueness of a self-similar Borel probability measure supported on the attractor of an IFS. We denote by $\simp_N$ the standard simplex in $\mathbb{R}^N$ and  $\simp_N^*\subseteq \simp_N$ consisting of $\alpha = (\alpha_0,\alpha_1,...,\alpha_{N-1}) \in \simp_N$ such that $\alpha_n < 1$ for all $n$ and call elements of $\simp_N$ {\em weight vectors}. We now paraphrase Hutchinson's result.

\begin{thm}[Hutchinson, \cite{Hutch}]
Suppose $\{\varphi_n\}_{n=0}^{N-1}$ is an IFS on a complete metric space $X$ with attractor $K$, and let $\alpha\in \simp_N$.  There exists a unique Borel regular measure $\mu^{\alpha}$ on $X$ supported on $K$ such that
\begin{equation}\label{invariance}
\mu^{\alpha}(E) = \sum_{n=0}^{N-1}\alpha_n\mu^{\alpha}\left(\varphi_n^{-1}(E)\right)
\end{equation}
for all Borel-measurable $E\subseteq X$.
\end{thm}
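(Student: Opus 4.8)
The plan is to realize $\mu^{\alpha}$ as the unique fixed point of a contraction on a complete metric space of probability measures and to invoke the Banach fixed-point theorem. Write $\mathcal{P}(K)$ for the set of Borel probability measures supported on the compact attractor $K$, and define the \emph{Hutchinson operator} $T\colon \mathcal{P}(K)\to\mathcal{P}(K)$ by $T\mu=\sum_{n=0}^{N-1}\alpha_n\,(\varphi_n)_\ast\mu$, where $(\varphi_n)_\ast\mu$ denotes the pushforward; equivalently $(T\mu)(E)=\sum_{n}\alpha_n\mu(\varphi_n^{-1}(E))$. A fixed point of $T$ is exactly a measure satisfying \eqref{invariance}, so it suffices to show that $T$ has a unique fixed point in $\mathcal{P}(K)$.

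First I would equip $\mathcal{P}(K)$ with the \emph{Hutchinson (Kantorovich--Wasserstein) metric}
\[
d_H(\mu,\nu)=\sup\Bigl\{\,\int f\,d\mu-\int f\,d\nu \ :\ f\colon K\to\mathbb{R}\text{ is $1$-Lipschitz}\,\Bigr\},
\]
and record the standard fact that, since $K$ is compact, $(\mathcal{P}(K),d_H)$ is a complete metric space whose topology coincides with the weak-$\ast$ topology. I would also check that $T$ maps $\mathcal{P}(K)$ into itself: each pushforward $(\varphi_n)_\ast\mu$ is a Borel probability measure supported on $\varphi_n(K)\subseteq K$, and the convex combination with weights $\alpha_n$ summing to $1$ is again a probability measure supported on $K$.

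The heart of the argument is the contraction estimate. Let $s=\max_n \mathrm{Lip}(\varphi_n)<1$ be a common contraction bound. For any $1$-Lipschitz $f$ each composite $f\circ\varphi_n$ is $s$-Lipschitz, so $s^{-1}f\circ\varphi_n$ is $1$-Lipschitz; using $\int f\,dT\mu=\sum_n\alpha_n\int f\circ\varphi_n\,d\mu$ together with $\sum_n\alpha_n=1$ we obtain
\[
\Bigl|\int f\,dT\mu-\int f\,dT\nu\Bigr|\le\sum_n\alpha_n\Bigl|\int f\circ\varphi_n\,d\mu-\int f\circ\varphi_n\,d\nu\Bigr|\le s\,d_H(\mu,\nu).
\]
Taking the supremum over all $1$-Lipschitz $f$ yields $d_H(T\mu,T\nu)\le s\,d_H(\mu,\nu)$, so $T$ is a contraction.

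By the Banach fixed-point theorem $T$ has a unique fixed point $\mu^{\alpha}\in\mathcal{P}(K)$, which is the desired measure; Borel regularity is automatic on a compact metric space. I expect the main obstacle to lie not in the fixed-point step but in the soft groundwork of the second paragraph, namely verifying that $d_H$ is genuinely a complete metric metrizing weak-$\ast$ convergence on $\mathcal{P}(K)$. For this I would either cite a standard reference or supply the duality/compactness argument, after which the entire theorem reduces to the one-line Lipschitz computation above.
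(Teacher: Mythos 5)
The paper does not prove this theorem; it is quoted verbatim (paraphrased) from Hutchinson \cite{Hutch}, so there is no in-paper argument to compare against. Your proposal is correct and is in fact essentially Hutchinson's original proof: the Hutchinson operator $T\mu=\sum_n\alpha_n(\varphi_n)_\ast\mu$ on $(\mathcal{P}(K),d_H)$, the contraction factor $s=\max_n\mathrm{Lip}(\varphi_n)<1$ obtained by testing against $1$-Lipschitz functions, and the Banach fixed-point theorem. The groundwork you flag (completeness of $d_H$ on $\mathcal{P}(K)$ for compact $K$, and the fact that $d_H$ metrizes weak-$\ast$ convergence) is standard and appropriately cited rather than reproved. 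One small point worth making explicit: the invariance relation \eqref{invariance} is invariant under scaling of $\mu^{\alpha}$, so uniqueness can only hold among \emph{unit-mass} (probability) measures; your restriction to $\mathcal{P}(K)$ silently supplies the normalization that the paper's paraphrase of the statement omits. Also, when some $\alpha_n=0$ the support of the fixed point is the attractor of the sub-IFS $\{\varphi_n:\alpha_n>0\}$, which may be a proper subset of $K$; this is consistent with the statement's ``supported on $K$'' read as containment of the support in $K$, and your argument handles it without change.
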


Using the terminology of \cite{JorgKornShu}, we refer to the measure $\mu^{\alpha}$ as the {\em $\alpha$-equilibrium measure} when $X = \mathbb{R}^n$ or $X = \mathbb{C}$. We will call the measure $\mu^{\alpha}$ an {\em $\alpha$-weighted Cantor measure} when the associated IFS $\{\varphi_n\}_{n=0}^{N-1}$ on $\mathbb{R}$ is given by $\varphi_n : x \mapsto (x + n)/N$. An equilibrium measure is described as having {\em maximal entropy} if the associated weights are uniform, i.e. $\alpha_n$ is either $0$ or $1/k$ for each $n$.  An equilibrium measure that has attracted a lot of interest in the non-smooth harmonic analysis community is the ternary Cantor measure which arises from the weight vector $\alpha = (1/2,0,1/2)$. In \cite{JorgPed}, Jorgensen and Pedersen addressed the question of when a maximal entropy equilibrium measure $\mu^\alpha$ is {\em spectral}, that is, if there exists some countable set $\Lambda \subset \mathbb{R}$ so that the complex exponential functions $\{e^{2\pi i\lambda x}\}_{\lambda \in \Lambda}$ form an orthonormal basis for the Hilbert space $L^2_{\mu^\alpha}[0,1]$.  Jorgensen and Pedersen found that, while the quaternary Cantor measure corresponding to $\alpha = (1/2,0,1/2,0)$ is spectral, the ternary Cantor measure is not.
\\
\\
Much effort has been made to remedy this artifact of the ternary Cantor measure.  In \cite{DPS}, Dutkay, Picioroaga, and Song constructed an orthonormal basis consisting of piecewise exponentials on the ternary Cantor set.  Strichartz in \cite{Strich} posed the question of the existence of a frame, which is a generalization of an orthonormal basis, on the ternary Cantor set; however, this problem remains open.  Polynomial function systems provide a tempting alternative.  To this end, we define the {\em Legendre polynomials} in $L^2_{\mu^\alpha}[0,1]$ to be the result of applying the Gram-Schmidt algorithm to any sequence of polynomials of degrees $0,1,2,\dots$, respectively.  At each step, it becomes necessary to compute inner products of the form $\int_{[0,1]} x^m\,d\mu^\alpha(x)$.  These quantities, better known as the \textit{moments} of the measure $\mu^\alpha$, have elicited a lot of attention. Dovgoshey, Martio, Ryazanov, and Vuorinen provide a fairly comprehensive survey of the ternary Cantor function, including moments of the measure for which it is the distribution, in \cite{Dovgoshey}; Jorgensen, Kornelson and Shuman in \cite{JorgKornShu} study the moments of  equilibrium measures through an operator theory perspective using infinite matrices.
\\
\\
Our main results are as follows.  In Section \ref{sec properties}, we make the connection of these measures to a result by Pei, showing that the weighted Cantor measures are singular except in the trivial case of $\alpha_n = 1/N$ for all $n$ when the measure is Lebesgue.  We then provide more content in the way of characterizing these measures.  In Proposition \ref{poly}, we prove a generalization of Bonnet's recursion formula for orthogonal polynomial systems.  In Theorem \ref{entire}, we derive an explicit infinite product formula for the Laplacian (and thus the moment generating function) of $\mu^\alpha$ and estimate in Theorem \ref{cauchy est} the rapid convergence of the coefficients of the partial product.  This leads to Remark \ref{algorithm} which outlines a $O(\log\log(1/\varepsilon)\cdot m\log m)$ algorithm for estimating the first $m$ moments to uniform error at most $\varepsilon>0$.

\section{Properties of the weighted Cantor measure}\label{sec properties}
Our first observation motivates the distinction of $\simp_N^*$ from the simplex $\simp_N$. It is a direct consequence of the uniqueness of a Borel measure satisfying the invariance relation in Equation (\ref{invariance}), and the proof is omitted.

\begin{prop}
Suppose $\alpha \in \simp_N$ with $\alpha_n = 1$ for some $n$. Then $\mu^{\alpha}$ is the Dirac measure centered at $n/(N - 1)$, the fixed point of $\varphi_n^{-1}$.
\end{prop}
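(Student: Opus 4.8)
The plan is to invoke the uniqueness half of Hutchinson's theorem rather than solving the invariance relation from scratch: I will exhibit a concrete Borel probability measure, supported on the attractor, that satisfies Equation~(\ref{invariance}) for this particular $\alpha$, after which uniqueness forces $\mu^\alpha$ to coincide with it. First I would record the structural consequence of the hypothesis. Since $\alpha \in \simp_N$ has nonnegative entries summing to $1$ and $\alpha_n = 1$, every other weight vanishes, so the invariance relation collapses to
\[
\mu^\alpha(E) = \mu^\alpha\bigl(\varphi_n^{-1}(E)\bigr) \qquad \text{for all Borel } E \subseteq [0,1];
\]
that is, $\mu^\alpha$ is invariant under $\varphi_n$.

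Next I would locate the fixed point. Solving $\varphi_n(p) = (p+n)/N = p$ gives $(N-1)p = n$, hence $p = n/(N-1)$; equivalently $p$ is the fixed point of $\varphi_n^{-1}: x \mapsto Nx - n$, since a bijection and its inverse share their fixed points. I would then verify directly that $\delta_p$ satisfies the reduced relation: for any Borel $E$ we have $p \in \varphi_n^{-1}(E)$ if and only if $\varphi_n(p) \in E$, and since $\varphi_n(p) = p$ this is equivalent to $p \in E$, so $\delta_p\bigl(\varphi_n^{-1}(E)\bigr) = \delta_p(E)$, as required.

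Finally, to apply the uniqueness clause — which quantifies over Borel regular measures supported on the attractor $K$ — I would confirm that $\delta_p$ meets those hypotheses. The attractor of $\{\varphi_n\}_{n=0}^{N-1}$ is $K = [0,1]$, since $\bigcup_{n}\varphi_n([0,1]) = \bigcup_n [n/N,(n+1)/N] = [0,1]$; and $p = n/(N-1) \in [0,1]$ for $0 \le n \le N-1$, so $\delta_p$ is indeed a Borel probability measure supported on $K$. Hutchinson's theorem then yields $\mu^\alpha = \delta_p$. I do not anticipate a genuine obstacle here, as the argument is essentially a verification; the only points demanding care are computing the fixed point correctly (observing that $\varphi_n$ and $\varphi_n^{-1}$ agree on it) and checking $p \in K$, so that the uniqueness statement, which is phrased for measures supported on the attractor, actually applies to the candidate $\delta_p$.
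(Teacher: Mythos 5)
Your proposal is correct and matches the paper's intent exactly: the paper omits the proof, noting only that the proposition "is a direct consequence of the uniqueness of a Borel measure satisfying the invariance relation," which is precisely the verify-$\delta_p$-and-invoke-uniqueness argument you carry out.
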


Given a finite Borel measure $\mu$ on $\mathbb{R}$, the cumulative distribution function (CDF) $F_\mu(x):=\mu(-\infty,x]$ is the increasing, right-continuous function which uniquely determines the measure. Therefore, to understand the weighted Cantor measure $\mu^{\alpha}$, it is useful to note some basic properties of $F_{\mu^\alpha}$.

\begin{prop}\label{CDF}
Fix $\alpha \in \simp_N^*$, and let $k$ be a positive integer. For $n_\ell \in \{0,1,...,N-1\}$,
\begin{align}\label{CDFident}
F_{\mu^{\alpha}}\left(\dfrac{1}{N^k}\left[1+\sum_{\ell=0}^{k-1}n_\ell N^\ell\right]\right) - F_{\mu^{\alpha}}\left(\dfrac{1}{N^k}\sum_{\ell=0}^{k-1}n_\ell N^\ell\right) = \prod_{\ell=0}^{k-1}\alpha_{n_\ell}.  
\end{align}
\end{prop}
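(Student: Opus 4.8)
The plan is to read the left-hand side as the $\mu^\alpha$-measure of a single base-$N$ \emph{cylinder} interval and then evaluate that measure by induction on $k$ using the invariance relation (\ref{invariance}). Writing $m = \sum_{\ell=0}^{k-1} n_\ell N^\ell$ (so that $0 \le m \le N^k - 1$ and $(n_0,\ldots,n_{k-1})$ are its base-$N$ digits), the two arguments of $F_{\mu^\alpha}$ are $m/N^k$ and $(m+1)/N^k$; since $F_{\mu^\alpha}(b) - F_{\mu^\alpha}(a) = \mu^\alpha((a,b])$, the identity (\ref{CDFident}) is equivalent to
\[
\mu^\alpha\!\left(\left(\tfrac{m}{N^k},\tfrac{m+1}{N^k}\right]\right) = \prod_{\ell=0}^{k-1}\alpha_{n_\ell}.
\]
I denote this half-open interval by $E_k$, and I write $E_{k-1}$ for its analogue built from the lower digits $n_0,\ldots,n_{k-2}$, i.e. from $m' = \sum_{\ell=0}^{k-2} n_\ell N^\ell$.

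First I would record the one fact about atoms that the boundary bookkeeping needs: $\mu^\alpha(\{0\}) = 0$. This is immediate from (\ref{invariance}) applied to $E = \{0\}$, since $\varphi_n^{-1}(\{0\}) = \{-n\}$ meets the support $[0,1]$ only for $n = 0$, giving $\mu^\alpha(\{0\}) = \alpha_0\,\mu^\alpha(\{0\})$ with $\alpha_0 < 1$. In particular $\mu^\alpha((0,1]) = \mu^\alpha([0,1]) = 1$, which serves as the base case ($k=0$, empty product) of the induction.

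For the inductive step I would apply (\ref{invariance}) to $E = E_k$. Because $\varphi_n^{-1}(y) = Ny - n$ and $m = m' + n_{k-1}N^{k-1}$, a direct affine computation gives $\varphi_n^{-1}(E_k) = E_{k-1} + (n_{k-1}-n)$. When $n = n_{k-1}$ this is exactly $E_{k-1}$; for every other $n$ the translate $E_{k-1} + (n_{k-1}-n)$ is pushed off $[0,1]$ (it can meet $[0,1]$ at all only in the single point $0$, when every lower digit equals $N-1$ and $n = n_{k-1}+1$), so by the previous paragraph it carries no $\mu^\alpha$-mass. Hence (\ref{invariance}) collapses to $\mu^\alpha(E_k) = \alpha_{n_{k-1}}\,\mu^\alpha(E_{k-1})$, and the induction hypothesis $\mu^\alpha(E_{k-1}) = \prod_{\ell=0}^{k-2}\alpha_{n_\ell}$ completes the step.

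The only genuinely delicate point is the vanishing of the off-diagonal terms, i.e. checking that each translate $E_{k-1} + (n_{k-1}-n)$ with $n \ne n_{k-1}$ is $\mu^\alpha$-null. The half-open convention $(a,b]$ together with $E_{k-1} \subseteq (0,1]$ confines any overlap with $[0,1]$ to the single endpoint $0$, which is precisely why the atomlessness at $0$ recorded above is needed; everything else is a routine change of variables.
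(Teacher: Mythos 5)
Your proof is correct and follows essentially the same route as the paper's: an induction on $k$ that applies the invariance relation (\ref{invariance}) once per step to peel off the weight $\alpha_{n_{k-1}}$ of the top digit, with the boundary terms killed by the observation that there is no mass at $0$ (the paper phrases this as $F_{\mu^{\alpha}}(0)=0$ via the CDF functional equation $F_{\mu^{\alpha}}(x)=\sum_n \alpha_n F_{\mu^{\alpha}}(Nx-n)$, you phrase it as $\mu^{\alpha}(\{0\})=0$). The only difference is cosmetic: you work directly with $\mu^{\alpha}$ of half-open cylinder intervals where the paper works with differences of CDF values.
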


\begin{proof}
From the invariance relation in Equation ($\ref{invariance}$), we note that the CDF satisfies
\begin{align}\label{CDFinvar}
F_{\mu^{\alpha}}(x) = \sum_{n=0}^{N-1}\alpha_nF_{\mu^{\alpha}}(Nx-n).
\end{align}
Then, since $F_{\mu^{\alpha}}$ is the CDF of a measure supported in $[0,1]$, we have $F_{\mu^{\alpha}}(0) = \alpha_0F_{\mu^{\alpha}}(0)$ which implies that $F_{\mu^{\alpha}}(0) = 0$. Equation $(\ref{CDFident})$ for $k = 1$ immediately follows from this observation and Equation ($\ref{CDFinvar}$). We proceed by induction on $k$. Applying Equation (\ref{CDFinvar}), we have
\begin{align*}
F_{\mu^{\alpha}}&\left(\dfrac{1}{N^{k+1}}\left[1+\sum_{\ell=0}^{k}n_\ell N^\ell\right]\right) - F_{\mu^{\alpha}}\left(\dfrac{1}{N^{k+1}}\sum_{\ell=0}^{k}n_\ell N^\ell\right) \\
& = \sum_{n=0}^{N-1} \alpha_n\left\{F_{\mu^{\alpha}}\left(\dfrac{1}{N^k}\left[1+\sum_{\ell=0}^{k-1}n_\ell N^\ell\right] + n_k - n\right) - F_{\mu^{\alpha}}\left(\dfrac{1}{N^k}\left[\sum_{\ell=0}^{k-1}n_\ell N^\ell\right] + n_k - n\right)\right\} \\
& = \alpha_{n_k}\left\{F_{\mu^{\alpha}}\left(\dfrac{1}{N^k}\left[1+\sum_{\ell=0}^{k-1}n_\ell N^\ell\right]\right) - F_{\mu^{\alpha}}\left(\dfrac{1}{N^k}\sum_{\ell=0}^{k-1}n_\ell N^\ell\right)\right\} \\
& = \alpha_{n_k}\prod_{\ell=0}^{k-1}\alpha_{n_\ell}
\end{align*}
which concludes the induction.

\end{proof}

Proposition \ref{CDF} readily implies that the monotone functions constructed by Pei in \cite{Pei} are identical to the CDF's of the weighted Cantor measures.  %Additionally, Pei constructed a class of monotone increasing functions from the distributions of random variables. These distributions satisfy identity $(\ref{CDFident})$ and are therefore the CDF of a weighted Cantor measure. 
Pei therefore proved results pertaining to differentiability and H\"older continuity of $F_{\mu^\alpha}$. We paraphrase those results.

\begin{thm}[Pei, \cite{Pei}]
Let $\alpha \in \simp_N$. $F_{\mu^{\alpha}}$ is strictly increasing unless $\alpha_n = 0$ for some $n$ and is H\"older continuous with the exponent $\log(1/r)/\log(N)$ where $r = \max\{\alpha_0,\alpha_1,...,\alpha_{N-1}\}$. Furthermore, $F_{\mu^{\alpha}}$ is singular continuous except when $\alpha$ is the uniform distribution $(1/N,...,1/N)$ in which case $F_{\mu^{\alpha}}(x) = x$.
\end{thm}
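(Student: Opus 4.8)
The plan is to read all three assertions off the cylinder-mass identity of Proposition \ref{CDF}. Writing $I_{n_0\cdots n_{k-1}}$ for the generation-$k$ base-$N$ interval $[m/N^k,(m+1)/N^k]$ with $m=\sum_{\ell=0}^{k-1}n_\ell N^\ell$, that proposition says precisely that $\mu^{\alpha}(I_{n_0\cdots n_{k-1}})=\prod_{\ell=0}^{k-1}\alpha_{n_\ell}$. Every claim will then follow from controlling this product. For strict monotonicity I would use that $F_{\mu^\alpha}$ is strictly increasing iff every nonempty subinterval of $[0,1]$ carries positive mass: if all $\alpha_n>0$ then each cylinder mass is positive, and since every open interval contains a cylinder, $F_{\mu^\alpha}$ increases strictly; conversely, if $\alpha_n=0$ for some digit $n$, then $I_n=[n/N,(n+1)/N]$ has mass $0$ and $F_{\mu^\alpha}$ is constant there.

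For H\"older continuity, fix $x<y$ and choose $k$ with $N^{-(k+1)}\le y-x<N^{-k}$. An interval of length less than $N^{-k}$ meets at most two generation-$k$ cylinders, and each such cylinder has mass $\prod_\ell\alpha_{n_\ell}\le r^k$; hence $F_{\mu^\alpha}(y)-F_{\mu^\alpha}(x)\le 2r^k$. Since $N^{\gamma}=1/r$ for $\gamma=\log(1/r)/\log N$, we have $(y-x)^{\gamma}\ge N^{-(k+1)\gamma}=r^{k+1}$, so $F_{\mu^\alpha}(y)-F_{\mu^\alpha}(x)\le (2/r)(y-x)^{\gamma}$. In particular $F_{\mu^\alpha}$ is continuous whenever $r<1$, which disposes of the continuity half of the last statement.

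For the singular/uniform dichotomy, the uniform case is settled by checking that Lebesgue measure on $[0,1]$ satisfies the invariance relation (\ref{invariance}) with $\alpha_n=1/N$, so by uniqueness $\mu^{\alpha}=\lambda$ and $F_{\mu^\alpha}(x)=x$. When $\alpha$ is not uniform I would prove $\mu^{\alpha}\perp\lambda$ probabilistically, regarding the base-$N$ digits of a point as i.i.d.\ random variables: under $\mu^{\alpha}$ digit $n$ occurs with probability $\alpha_n$, while under $\lambda$ it is uniform. If some $\alpha_n=0$ the support already has Lebesgue measure $0$. Otherwise the strong law of large numbers shows that for $\mu^{\alpha}$-a.e.\ $x$ the digit frequencies converge to $(\alpha_0,\dots,\alpha_{N-1})$, whereas for $\lambda$-a.e.\ $x$ they converge to $(1/N,\dots,1/N)$; these limit conditions describe disjoint sets, yielding a set of full $\mu^{\alpha}$-measure and zero $\lambda$-measure. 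Equivalently, along the cylinder $I_k$ containing $x$ the ratio $\mu^{\alpha}(I_k)/\lambda(I_k)=\prod_\ell(N\alpha_{n_\ell})$ has log-average converging by the law of large numbers to $\tfrac1N\sum_n\log(N\alpha_n)$, which by the strict Jensen inequality is $<0$ unless $\alpha$ is uniform, so $F_{\mu^\alpha}'=0$ $\lambda$-a.e.

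The main obstacle is exactly this final step: making rigorous the passage from the heuristic ``$F_{\mu^\alpha}'$ equals the limiting mass ratio'' to an honest singularity statement. I would sidestep differentiation theory altogether and instead produce the two disjoint full-measure sets directly from the strong law of large numbers, since exhibiting a Borel set of full $\mu^{\alpha}$-measure and zero Lebesgue measure is the cleanest certificate that $\mu^{\alpha}\perp\lambda$, and it simultaneously pins down the sole exceptional uniform case via the equality condition in Jensen's inequality.
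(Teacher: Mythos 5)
Your argument is essentially correct, but note that the paper does not prove this theorem at all: it is quoted from Pei (Hsu, \cite{Pei}), and the authors' only contribution here is the observation, via Proposition \ref{CDF}, that Pei's singular functions coincide with the CDFs $F_{\mu^\alpha}$. So you have supplied a self-contained proof where the paper defers to the literature, and your proof is built on exactly the ingredient the paper highlights, namely the cylinder-mass identity $\mu^\alpha(I_{n_0\cdots n_{k-1}})=\prod_\ell\alpha_{n_\ell}$. All three steps check out: the monotonicity dichotomy is immediate from positivity of cylinder masses; the H\"older estimate $F_{\mu^\alpha}(y)-F_{\mu^\alpha}(x)\le 2r^k\le (2/r)(y-x)^{\gamma}$ is correct since an interval of length under $N^{-k}$ meets at most two generation-$k$ cylinders and $N^{\gamma}=1/r$; and the strong-law argument produces a Borel set of full $\mu^\alpha$-measure and zero Lebesgue measure whenever $\alpha$ is non-uniform, with the uniform case handled by uniqueness in (\ref{invariance}). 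Two small points deserve a sentence each if you write this up. First, the digit coordinates are only well defined off the $N$-adic rationals, so you should note that this countable set is null for both $\lambda$ and $\mu^\alpha$ (each point lies in a nested sequence of cylinders of mass at most $r^k\to 0$ when $r<1$), which is also where the hypothesis $\alpha\in\simp_N^*$ quietly enters: if some $\alpha_n=1$ the measure is Dirac, $F_{\mu^\alpha}$ is discontinuous, and the ``singular continuous'' conclusion fails, so the theorem as paraphrased implicitly excludes that case. Second, your claim that the support has Lebesgue measure zero when some $\alpha_n=0$ should be justified, e.g.\ by observing that the union of the positive-mass generation-$k$ cylinders has Lebesgue measure at most $((N-1)/N)^k$.
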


%It is clear from Proposition \ref{CDF} that $\alpha \in \simp_N$ uniquely determines a weighted Cantor measure; however, the converse is not true. That is, a weighted Cantor measure arises from infinitely many different probability vectors (of various lengths).

Recall that the weighted Cantor measure is determined by weighting, scaling and translating under the IFS according to the invariance relation in Equation (\ref{invariance}). The next proposition illustrates that this invariant condition applies as well to the weight vector. Precisely, there are $\alpha\in \simp_M$ and $\beta\in\simp_N$ with $M \neq N$ so that $\mu^\alpha = \mu^\beta$.

\begin{prop}\label{equivmeas}
Fix $\alpha \in \simp_N$. Let $\beta = \alpha^{\otimes k}$, the Kronecker product of $\alpha$ with itself $k$ times.  Then $\mu^\alpha = \mu^\beta$.  
%let $\beta$ be the weight vector of length $N^k$ given by
%\[
%\beta_n = \prod_{j=0}^{k-1}\alpha_{n_j}
%\]
%where $\{n_j\}_{j=0}^{k-1}$ are the base-$N$ decomposition coefficients for $n$, i.e.
%\[
%n = \sum_{j=0}^{k-1}n_jN^j.
%\]
\end{prop}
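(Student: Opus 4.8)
The plan is to lean entirely on the uniqueness clause of Hutchinson's theorem. The measure $\mu^\beta$ is, by definition, the unique Borel probability measure supported on the attractor of the IFS $\{\psi_m\}_{m=0}^{N^k-1}$ given by $\psi_m : x \mapsto (x+m)/N^k$, and this attractor is again $[0,1]$. Since $\mu^\alpha$ is also a Borel probability measure on $[0,1]$, it suffices to verify that $\mu^\alpha$ satisfies the $\beta$-invariance relation; uniqueness then forces $\mu^\alpha = \mu^\beta$.

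To produce that relation, I would iterate Equation (\ref{invariance}) exactly $k$ times. Expanding $\mu^\alpha(E) = \sum_{n_0} \alpha_{n_0}\,\mu^\alpha(\varphi_{n_0}^{-1}(E))$ and reapplying the same identity to each resulting term yields
\[
\mu^\alpha(E) = \sum_{(n_0,\dots,n_{k-1}) \in \{0,\dots,N-1\}^k} \left(\prod_{\ell=0}^{k-1}\alpha_{n_\ell}\right) \mu^\alpha\!\left((\varphi_{n_0}\circ\cdots\circ\varphi_{n_{k-1}})^{-1}(E)\right).
\]

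The crux is identifying the composed map. A one-line induction on $k$ gives $\varphi_{n_0}\circ\cdots\circ\varphi_{n_{k-1}} : x \mapsto (x+m)/N^k$, where $m = \sum_{\ell=0}^{k-1} n_\ell N^{k-1-\ell}$ is the integer with base-$N$ digits $n_0,\dots,n_{k-1}$; hence the composition is precisely $\psi_m$. Because base-$N$ representation is a bijection $\{0,\dots,N-1\}^k \to \{0,\dots,N^k-1\}$, and because the entries of the Kronecker power $\alpha^{\otimes k}$ are exactly the products $\prod_\ell \alpha_{n_\ell}$ arranged by this same digit ordering, the displayed sum collapses to $\mu^\alpha(E) = \sum_{m=0}^{N^k-1} \beta_m\, \mu^\alpha(\psi_m^{-1}(E))$. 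A quick check that $\sum_m \beta_m = \left(\sum_n \alpha_n\right)^k = 1$ confirms $\beta \in \simp_{N^k}$, so this is a bona fide invariance relation, and we are done.

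I expect the only subtlety to be purely notational: aligning the coordinate-ordering convention of the Kronecker product with the base-$N$ digit ordering that emerges from the composition, so that each weight $\prod_\ell \alpha_{n_\ell}$ lands in the coordinate $\beta_m$ attached to the correct map $\psi_m$. Choosing the opposite convention merely permutes the labels of the maps $\psi_m$ and leaves the measure unchanged, so no generality is lost either way.
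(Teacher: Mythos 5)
Your proposal is correct and follows essentially the same route as the paper: iterate the invariance relation in Equation (\ref{invariance}) $k$ times, identify the resulting $k$-fold composition of the maps $\varphi_{n_\ell}$ with the map $\psi_m : x\mapsto (x+m)/N^k$ indexed by the base-$N$ digit string $(n_0,\dots,n_{k-1})$, match the product weights $\prod_\ell \alpha_{n_\ell}$ with the entries of $\alpha^{\otimes k}$, and conclude by the uniqueness clause of Hutchinson's theorem. The only difference is your explicit attention to the digit-ordering convention, which the paper handles implicitly by its choice of composition order.
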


\begin{proof}
It is readily checked that the element of $\beta$ indexed by $n = n_0 + n_1N + ... + n_{k-1}N^{k-1}$ where $n_\ell \in \{0,1,...,N-1\}$ is
\[
\beta_n = \prod_{\ell = 0}^{k-1} \alpha_{n_\ell}.
\]
The associated IFS for $\mu^{\beta}$ is $\{\psi_n\}_{n=0}^{N^k-1}$ where $\psi_n(x) = (x + n)/N^k$. Then, from the invariance relation in Equation (\ref{invariance}), we find
\[
\mu^{\beta}(E) = \sum_{n_0,n_1,...,n_{k-1}=0}^{N-1}\left(\prod_{\ell=0}^{k-1}\alpha_{n_\ell}\right)\mu^{\beta}(\psi_n^{-1}(E)).
\]
Since the IFS $\{\varphi_n\}_{n=0}^{N-1}$ for $\mu^{\alpha}$ is given by $\varphi_n(x) = (x + n)/N$, we have
\begin{align*}
\mu^{\alpha}(E) &= \sum_{n_{k-1} = 0}^{N-1}\alpha_{n_{k-1}}\mu^{\alpha}(\varphi_{n_{k-1}}^{-1}(E)) \\
&= \sum_{n_{k-1} = 0}^{N-1}\alpha_{n_{k-1}}\sum_{n_{k-2}=0}^{N-1}\alpha_{n_{k-2}}\mu^{\alpha}((\varphi_{n_{k-2}}^{-1}\circ\varphi_{n_{k-1}}^{-1})(E)) \\
&= ... = \sum_{n_0,n_1,...,n_{k-1}=0}^{N-1}\left(\prod_{\ell=0}^{k-1}\alpha_{n_\ell}\right)\mu^{\alpha}((\varphi_{n_0}^{-1}\circ ...\circ\varphi_{n_{k-2}}^{-1}\circ\varphi_{n_{k-1}}^{-1})(E)) \\
&= \sum_{n_0,n_1,...,n_{k-1}=0}^{N-1}\left(\prod_{\ell=0}^{k-1}\alpha_{n_\ell}\right)\mu^{\alpha}(\psi_n^{-1}(E)).
\end{align*}
By uniqueness of the measure, it follows that $\mu^{\alpha} = \mu^{\beta}$, as desired.

\end{proof}

For each positive integer $k$, we denote the sample $S_k \subset [0,1]$ as the set
\[
S_k := \left\{\dfrac{1}{N^k}\sum_{\ell=0}^{k-1}n_\ell N^\ell\,\middle|\,n_\ell \in \{0,1,...,N-1\}\right\}\cup\{1\}.
\]
Further, we define $F_{\mu^{\alpha},k} : [0,1] \rightarrow [0,1]$ to be the linear interpolation of the $N^k + 1$ many points $\{(x,F_{\mu^{\alpha}}(x))\,|\,x \in S_k\}$. Note, from Proposition \ref{equivmeas}, that $F_{\mu^{\alpha},k} = F_{\mu^{\beta},1}$ where $\beta = \alpha^{\otimes k}$.

\begin{prop}\label{unifconv}
Let $\alpha \in \simp_N^*$. The sequence $\{F_{\mu^{\alpha},k}\}$ converges uniformly to $F_{\mu^{\alpha}}$.  
\end{prop}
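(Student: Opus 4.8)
The plan is to exploit the fact that the sample $S_k$ is precisely the uniform grid $\{j/N^k : 0 \le j \le N^k\}$. Indeed, as the digit tuple $(n_0,\dots,n_{k-1})$ ranges over $\{0,1,\dots,N-1\}^k$, the quantity $\sum_{\ell=0}^{k-1} n_\ell N^\ell$ runs bijectively over $\{0,1,\dots,N^k-1\}$, so $F_{\mu^{\alpha},k}$ is exactly the piecewise-linear function agreeing with $F_{\mu^{\alpha}}$ at the endpoints of each cell $[j/N^k,(j+1)/N^k]$ and interpolating linearly in between. I would therefore reduce the global sup-norm estimate $\|F_{\mu^{\alpha}}-F_{\mu^{\alpha},k}\|_\infty$ to a uniform per-cell estimate.

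The key observation is a monotonicity-trapping argument. On a fixed cell $[j/N^k,(j+1)/N^k]$, the function $F_{\mu^{\alpha}}$ is nondecreasing, and its linear interpolant $F_{\mu^{\alpha},k}$ is also nondecreasing since it is built from the ordered endpoint values $F_{\mu^{\alpha}}(j/N^k) \le F_{\mu^{\alpha}}((j+1)/N^k)$. Hence both functions are confined to the interval $\bigl[F_{\mu^{\alpha}}(j/N^k),\, F_{\mu^{\alpha}}((j+1)/N^k)\bigr]$ throughout the cell, so their pointwise difference there is bounded by the length of that interval, namely the increment $F_{\mu^{\alpha}}((j+1)/N^k)-F_{\mu^{\alpha}}(j/N^k)$.

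Next I would invoke Proposition \ref{CDF}: writing $j=\sum_{\ell=0}^{k-1}n_\ell N^\ell$, this increment equals exactly $\prod_{\ell=0}^{k-1}\alpha_{n_\ell}$. Setting $r=\max_n \alpha_n$ and using $0 \le \alpha_{n_\ell}\le r$ for each factor, every cell increment is at most $r^k$, whence $\|F_{\mu^{\alpha}}-F_{\mu^{\alpha},k}\|_\infty \le r^k$. Since $\alpha\in\simp_N^*$ forces $r<1$, we obtain $r^k\to 0$, giving uniform convergence together with an explicit exponential rate. As for obstacles, there is essentially none of substance: the argument is self-contained once $S_k$ is recognized as a uniform mesh, and no appeal to Pei's H\"older estimate is needed because Proposition \ref{CDF} supplies the exact increments. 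The only point requiring a moment's care is the trapping step, i.e. confirming that interpolant and target share the same range on each cell; but this is elementary and follows directly from monotonicity of $F_{\mu^{\alpha}}$.
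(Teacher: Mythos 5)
Your proof is correct, and while it rests on the same key estimate as the paper's --- namely that Proposition \ref{CDF} gives each cell increment $F_{\mu^{\alpha}}((j+1)/N^k)-F_{\mu^{\alpha}}(j/N^k)=\prod_{\ell=0}^{k-1}\alpha_{n_\ell}\leq r^k$ with $r=\max_n\alpha_n<1$ --- its logical structure is genuinely more direct. The paper applies the trapping idea to two interpolants $F_{\mu^{\alpha},j}$ and $F_{\mu^{\alpha},k}$ with $j\geq k$, concludes that the sequence is uniformly Cauchy, and then must identify the uniform limit $f$ with $F_{\mu^{\alpha}}$ via pointwise agreement on the dense set $\bigcup_k S_k$ together with right-continuity of $F_{\mu^{\alpha}}$ and continuity of $f$. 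You instead trap $F_{\mu^{\alpha}}$ itself and its interpolant in the common interval $[F_{\mu^{\alpha}}(j/N^k),F_{\mu^{\alpha}}((j+1)/N^k)]$ on each cell, using only monotonicity of both functions, and so obtain $\|F_{\mu^{\alpha}}-F_{\mu^{\alpha},k}\|_\infty\leq r^k$ outright. This buys you an explicit rate against the target function (not merely between consecutive approximants) and eliminates both the Cauchy-completeness step and the density/continuity argument needed to identify the limit; your version also never uses continuity of $F_{\mu^{\alpha}}$, since the trapping bound holds for any nondecreasing function. The one point to state carefully in a full write-up is the identification of $S_k$ with the uniform grid $\{j/N^k\}$, which you already address via the bijection $(n_0,\dots,n_{k-1})\mapsto\sum_\ell n_\ell N^\ell$.
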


\begin{proof}
Let $r = \max\{\alpha_0,\alpha_1,...,\alpha_{N-1}\} < 1$. Let $\varepsilon > 0$, and choose an integer $k$ such that $r^k < \varepsilon$. We show that $\|F_{\mu^{\alpha},j} - F_{\mu^{\alpha},k}\|_{\infty} < \varepsilon$ for every integer $j \geq k$. Since $|F_{\mu^{\alpha},j} - F_{\mu^{\alpha},k}|(x)$ is continuous on $[0,1]$, there exists an $x \in [0,1]$ such that
\[
\|F_{\mu^{\alpha},j} - F_{\mu^{\alpha},k}\|_{\infty} = \left|F_{\mu^{\alpha},j}(x) - F_{\mu^{\alpha},k}(x)\right|.
\]
There are $n_\ell \in \{0,1,...,N-1\}$ such that
\[
\dfrac{1}{N^k}\sum_{\ell=0}^{k-1}n_\ell N^\ell \leq x \leq \dfrac{1}{N^k}\left(1 + \sum_{\ell=0}^{k-1}n_\ell N^\ell\right).
\]
Since $F_{\mu^{\alpha},k}$ and $F_{\mu^{\alpha},j}$ are linear interpolations of points belonging to $F_{\mu^{\alpha}}$, we have
\[
\left|F_{\mu^{\alpha},j}(x) - F_{\mu^{\alpha},k}(x)\right| \leq F_{\mu^{\alpha}}\left(\dfrac{1}{N^k}\left[1 + \sum_{\ell=0}^{k-1}n_\ell N^\ell\right]\right) - F_{\mu^{\alpha}}\left(\dfrac{1}{N^k}\sum_{\ell=0}^{k-1}n_\ell N^\ell\right) = \prod_{\ell=0}^{k-1}\alpha_{n_\ell} \leq r^k < \varepsilon.
\]
Therefore the sequence $\{F_{\mu^{\alpha},k}\}$ is uniformly Cauchy and, thus, converges uniformly to some continuous function $f$. Since $\{F_{\mu^{\alpha},k}\}$ converges pointwise to $F_{\mu^{\alpha}}$ on a dense set, we have $F_{\mu^{\alpha}} = f$ on a dense set. Then, because $F_{\mu^{\alpha}}$ is right-continuous and $f$ is continuous, we have $f = F_{\mu^{\alpha}}$.

\end{proof}

For illustration, we attain the graph of $F_{\mu^{\alpha},k}$ through $F_{\mu^{\beta},1}$ where $\beta = \alpha^{\otimes k}$, as stated above. The benefit of the latter is that it is somewhat simple to take the Kronecker product of vectors up to sufficient resolution in programs such as Mathematica, which was used to produce Figure \ref{CDFs}.

\begin{figure}[h]
\begin{subfigure}[b]{0.48\textwidth}
\includegraphics[width=\textwidth]{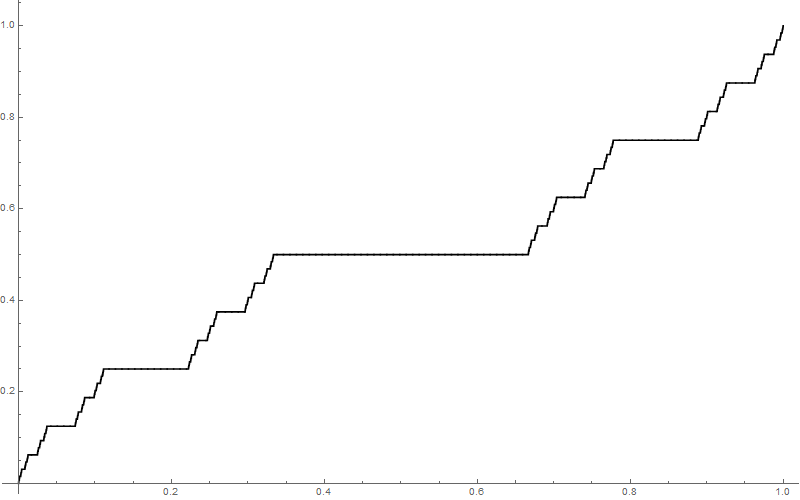}
\caption{$\alpha = (1/2,0,1/2)$}
\end{subfigure}
\begin{subfigure}[b]{0.48\textwidth}
\includegraphics[width=\textwidth]{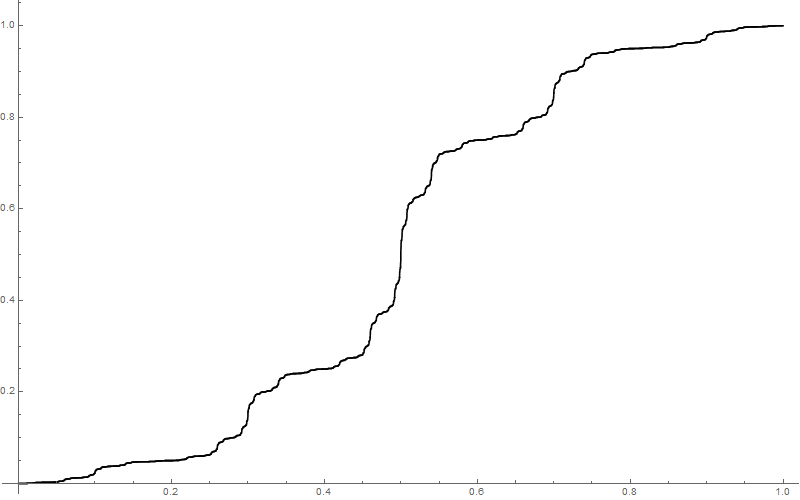}
\caption{$\alpha = (1/20,1/5,1/2,1/5,1/20)$}
\end{subfigure}
\caption{Graph of $F_{\mu^{\alpha}}$ for selected $\alpha$}
\label{CDFs}
\end{figure}

\newpage

\noindent The next results show that a small variation in $\alpha \in \simp_N^*$ leads to a relatively small variation in the corresponding measure. We start with a lemma which is pertinent to those results.

\begin{lem}\label{CDFapprox}
Fix positive integers $N$ and $k$. There exists a constant $c(N,k) > 0$ such that
\[
\|F_{\mu^{\alpha},k} - F_{\mu^{\beta},k}\|_\infty \leq c(N,k)\|\alpha - \beta\|_\infty
\]
for every $\alpha,\beta \in \simp^*_N$.
\end{lem}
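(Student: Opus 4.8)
The plan is to reduce the $L^\infty$ estimate on the interpolants to an estimate at the sample points $S_k$, and then to control the difference of the sampled CDF values using Proposition \ref{CDF}.

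First I would observe that $F_{\mu^{\alpha},k}$ and $F_{\mu^{\beta},k}$ are both piecewise linear with the \emph{same} breakpoints $S_k$, so their difference $g := F_{\mu^{\alpha},k} - F_{\mu^{\beta},k}$ is piecewise linear on that same partition. Since the absolute value of an affine function on an interval is maximized at an endpoint, $g$ attains $\|g\|_\infty$ at some breakpoint, and the interpolants agree with the true CDFs on $S_k$; hence
\[
\|F_{\mu^{\alpha},k} - F_{\mu^{\beta},k}\|_\infty = \max_{x \in S_k} \left|F_{\mu^{\alpha}}(x) - F_{\mu^{\beta}}(x)\right|.
\]
This reduces the problem to bounding $|F_{\mu^{\alpha}}(x) - F_{\mu^{\beta}}(x)|$ at a single sample point.

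Next I would telescope each sampled value. Writing $x_m = m/N^k$ for $0 \le m \le N^k$ and using $F_{\mu^{\alpha}}(0) = 0$ together with the increment formula from Proposition \ref{CDF}, one gets
\[
F_{\mu^{\alpha}}(x_m) = \sum_{j=0}^{m-1} \prod_{\ell=0}^{k-1} \alpha_{(j)_\ell},
\]
where $(j)_\ell$ denotes the $\ell$-th base-$N$ digit of $j$. Subtracting the analogous expression for $\beta$ and applying the triangle inequality, the task becomes bounding each difference of products $\left|\prod_{\ell} \alpha_{(j)_\ell} - \prod_{\ell} \beta_{(j)_\ell}\right|$.

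The key step, and the only one requiring real care, is this estimate on a difference of products. Using the standard hybrid identity
\[
\prod_{\ell=0}^{k-1} a_\ell - \prod_{\ell=0}^{k-1} b_\ell = \sum_{i=0}^{k-1} \left(\prod_{\ell<i} a_\ell\right)(a_i - b_i)\left(\prod_{\ell>i} b_\ell\right),
\]
together with the fact that every coordinate of a weight vector lies in $[0,1]$, so that each partial product is at most $1$, I would obtain $\left|\prod_{\ell} \alpha_{(j)_\ell} - \prod_{\ell} \beta_{(j)_\ell}\right| \le k\,\|\alpha - \beta\|_\infty$. Summing over the at most $N^k$ indices $j < m$ then yields $|F_{\mu^{\alpha}}(x_m) - F_{\mu^{\beta}}(x_m)| \le k N^k \|\alpha - \beta\|_\infty$, so the lemma holds with $c(N,k) = k N^k$. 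I expect the main obstacle to be essentially bookkeeping: tracking the base-$N$ digit decomposition and verifying the hybrid bound uniformly across all sample points. Both the max-at-a-vertex reduction and the product estimate are short once the setup is in place.
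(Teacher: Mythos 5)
Your proof is correct and follows essentially the same route as the paper: reduce the sup norm to the sample points of $S_k$, telescope the CDF difference into a sum of differences of digit-products, and bound each such difference by $k\|\alpha-\beta\|_\infty$, arriving at the same constant $c(N,k)=kN^k$. The only cosmetic difference is that you invoke the hybrid product identity directly where the paper proves the equivalent bound $\|\alpha^{\otimes k}-\beta^{\otimes k}\|_\infty\leq \|\alpha-\beta\|_\infty\sum_{\ell=0}^{k-1}\|\alpha\|_\infty^{\ell}\|\beta\|_\infty^{k-1-\ell}$ by induction.
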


\begin{proof}
Let $\alpha,\beta \in \simp^*_N$. Because $F_{\mu^{\alpha},k}$ and $F_{\mu^{\beta},k}$ are linear interpolations of $F_{\mu^{\alpha}}$ and $F_{\mu^{\beta}}$, respectively, on the set $S_k$, there exists a positive number $x \in S_k$ such that
\[
\|F_{\mu^{\alpha},k} - F_{\mu^{\beta},k}\|_\infty = |F_{\mu^{\alpha},k}(x) - F_{\mu^{\beta},k}(x)|.
\]
Suppose $n_\ell \in \{0,1,...,N-1\}$ such that
\[
x - \dfrac{1}{N^k} = \dfrac{1}{N^k}\sum_{\ell = 0}^{k - 1} n_{\ell}N^{\ell}.
\]
Then, by Proposition \ref{CDF}, we have
\begin{align*}
\|F_{\mu^{\alpha},k} - F_{\mu^{\beta},k}\|_{\infty} &= |F_{\mu^{\alpha},k}(x) - F_{\mu^{\beta},k}(x)| \\
&\leq \left|F_{\mu^{\alpha},k}\left(x - \dfrac{1}{N^k}\right) - F_{\mu^{\beta},k}\left(x - \dfrac{1}{N^k}\right)\right| \\
&\hspace{1cm} + \left|F_{\mu^{\alpha},k}(x) - F_{\mu^{\alpha},k}\left(x - \dfrac{1}{N^k}\right) - \left[F_{\mu^{\beta},k}(x) - F_{\mu^{\beta},k}\left(x - \dfrac{1}{N^k}\right)\right]\right| \\
&= \left|F_{\mu^{\alpha},k}\left(x - \dfrac{1}{N^k}\right) - F_{\mu^{\beta},k}\left(x - \dfrac{1}{N^k}\right)\right| + \left|\prod_{\ell = 0}^{k-1}\alpha_{n_\ell} - \prod_{\ell = 0}^{k-1}\beta_{n_\ell}\right|
\end{align*}
Repeating this argument sufficiently many times, we attain
\[
\|F_{\mu^{\alpha},k} - F_{\mu^{\beta},k}\|_{\infty} \leq \sum_{\vec{m}} \left|\prod_{\ell = 0}^{k-1}\alpha_{m_\ell} - \prod_{\ell = 0}^{k-1}\beta_{m_\ell}\right| \leq \left\|\alpha^{\otimes k} - \beta^{\otimes k}\right\|_1 \leq N^k\left\|\alpha^{\otimes k} - \beta^{\otimes k}\right\|_\infty
\]
where the sum ranges over $\vec{m} = (m_0,m_1,...,m_{k-1}) \in \{0,1,...,N-1\}^k$ satisfying
\[
\dfrac{1}{N^k}\sum_{\ell = 0}^{k-1}m_\ell N^\ell \leq x - \dfrac{1}{N^k}.
\]
We conclude the proof by showing the upper bound
\[
\left\|\alpha^{\otimes k} - \beta^{\otimes k}\right\|_{\infty} \leq \|\alpha - \beta\|_\infty\sum_{\ell = 0}^{k - 1}\|\alpha\|_\infty^\ell\|\beta\|_\infty^{k - 1 - \ell}.
\]
The inequality is trivial for $k = 1$. We proceed by induction on $k$. There are $m_\ell \in \{0,1,...,N-1\}$ such that
\begin{align*}
\left\|\alpha^{\otimes (k+1)} - \beta^{\otimes (k+1)}\right\|_\infty &= \left|\prod_{\ell=0}^k\alpha_{m_\ell} - \prod_{\ell=0}^k\beta_{m_\ell}\right| \\
&\leq \left|\prod_{\ell=0}^k\alpha_{m_\ell} - \beta_{m_k}\prod_{\ell=0}^{k-1}\alpha_{m_\ell}\right| + \left|\beta_{m_k}\prod_{\ell=0}^{k-1}\alpha_{m_\ell} - \prod_{\ell=0}^k\beta_{m_\ell}\right| \\
&= |\alpha_{m_k} - \beta_{m_k}|\left(\prod_{\ell=0}^{k-1}\alpha_{m_\ell}\right) + \beta_{m_k}\left|\prod_{\ell = 0}^{k-1}\alpha_{m_\ell} - \prod_{\ell=0}^{k-1}\beta_{m_\ell}\right| \\
&\leq \|\alpha - \beta\|_\infty\|\alpha\|_\infty^k + \|\beta\|_\infty\left\|\alpha^{\otimes k} - \beta^{\otimes k}\right\|_\infty \\
&\leq \|\alpha - \beta\|_\infty\|\alpha\|_\infty^k + \|\beta\|_\infty\|\alpha - \beta\|_\infty\sum_{\ell = 0}^{k - 1}\|\alpha\|_\infty^\ell\|\beta\|_\infty^{k - 1 - \ell} \\
&= \|\alpha - \beta\|_\infty\sum_{\ell = 0}^k\|\alpha\|_\infty^{\ell}\|\beta\|_\infty^{k-\ell}.
\end{align*}
This concludes the induction. Now, since $\|\alpha\|_\infty < 1$ and $\|\beta\|_\infty < 1$, we may let $c(N,k) = kN^k$.

\end{proof}

\begin{rem}
Given $\alpha,\beta \in \simp_N^*$, we note that $|F_{\mu^{\alpha}} - F_{\mu^{\beta}}|(x)$ need not attain the value $\|F_{\mu^{\alpha}} - F_{\mu^{\beta}}\|_\infty$ on the set $S_k$ for any $k$, e.g. $\alpha = (0,1/2,1/2)$ and $\beta = (1/2,1/2,0)$ where $\mu^{\alpha}$ is supported in $[1/2,1]$ and $\mu^{\beta}$ is supported in $[0,1/2]$.
\end{rem}

\begin{prop}\label{transform}
The transform $\alpha \mapsto F_{\mu^{\alpha}} : \simp_N^* \rightarrow C[0,1]$ is continuous.
\end{prop}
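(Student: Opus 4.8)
The plan is to run the standard three-term (``$\varepsilon/3$'') approximation argument, using the polygonal approximants $F_{\mu^{\alpha},k}$ as intermediaries. Fixing a positive integer $k$ to be chosen later, I split
\[
\|F_{\mu^{\beta}} - F_{\mu^{\alpha}}\|_\infty \le \|F_{\mu^{\beta}} - F_{\mu^{\beta},k}\|_\infty + \|F_{\mu^{\beta},k} - F_{\mu^{\alpha},k}\|_\infty + \|F_{\mu^{\alpha},k} - F_{\mu^{\alpha}}\|_\infty.
\]
The middle term is already Lipschitz-controlled in the weight vector: by Lemma \ref{CDFapprox} it is at most $c(N,k)\|\alpha-\beta\|_\infty = kN^k\|\alpha-\beta\|_\infty$. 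The two outer terms are the approximation errors of $F_{\mu^{\gamma},k}$, and the heart of the argument is to bound them \emph{uniformly} for all $\beta$ in a suitable neighborhood of $\alpha$.

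To handle the outer terms, I would first extract a quantitative version of Proposition \ref{unifconv}. Its proof shows $\|F_{\mu^{\gamma},j} - F_{\mu^{\gamma},k}\|_\infty \le \|\gamma\|_\infty^k$ for all $j \ge k$; letting $j \to \infty$ and invoking the uniform convergence established there yields the tail bound $\|F_{\mu^{\gamma}} - F_{\mu^{\gamma},k}\|_\infty \le \|\gamma\|_\infty^k$ for every $\gamma \in \simp_N^*$. Now I fix $\alpha \in \simp_N^*$, set $r_\alpha := \|\alpha\|_\infty < 1$, and restrict attention to those $\beta$ with $\|\beta - \alpha\|_\infty < (1 - r_\alpha)/2$. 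For such $\beta$ the reverse triangle inequality gives $\|\beta\|_\infty \le \rho := (1 + r_\alpha)/2 < 1$; in particular every coordinate of $\beta$ is strictly less than $1$, so $\beta \in \simp_N^*$ and the measure $\mu^\beta$ is defined, while the maximum weight of $\beta$ stays bounded away from $1$ uniformly across the neighborhood.

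With this uniform control in hand, I choose $k$ large enough that $\rho^k < \varepsilon/3$; then both outer terms satisfy $\|F_{\mu^{\alpha}} - F_{\mu^{\alpha},k}\|_\infty \le r_\alpha^k \le \rho^k < \varepsilon/3$ and $\|F_{\mu^{\beta}} - F_{\mu^{\beta},k}\|_\infty \le \|\beta\|_\infty^k \le \rho^k < \varepsilon/3$ simultaneously. Having fixed $k$, I finally set $\delta := \min\{(1 - r_\alpha)/2,\ \varepsilon/(3kN^k)\}$, so that whenever $\beta \in \simp_N^*$ satisfies $\|\beta - \alpha\|_\infty < \delta$, the middle term is at most $kN^k\delta < \varepsilon/3$. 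Summing the three bounds gives $\|F_{\mu^{\beta}} - F_{\mu^{\alpha}}\|_\infty < \varepsilon$, which is continuity at $\alpha$.

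The main obstacle is the order of quantifiers in the tail estimate. A naive appeal to Proposition \ref{unifconv} produces a cutoff $k$ depending on $\|\gamma\|_\infty$, hence potentially a different $k$ for each competing $\beta$, which would break the argument since Lemma \ref{CDFapprox}'s constant $kN^k$ blows up with $k$. The resolution is precisely the defining feature of $\simp_N^*$: requiring $\|\alpha\|_\infty < 1$ \emph{strictly} forces the maximal weight to remain bounded away from $1$ on a whole neighborhood of $\alpha$, so a single $k$ controls the tails of $\mu^\alpha$ and all nearby $\mu^\beta$ at once. This is also why continuity is asserted on $\simp_N^*$ rather than on the full simplex $\simp_N$, where a boundary point with some $\alpha_n = 1$ would defeat the uniform tail bound.
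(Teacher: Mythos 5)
Your proof is correct, and it takes a genuinely different route from the paper's at the key step. Both arguments use the interpolants $F_{\mu^{\gamma},k}$ and Lemma \ref{CDFapprox} to control the discrepancy at the sample points $S_k$, but they diverge in how they pass from $S_k$ to all of $[0,1]$. You use the three-term decomposition and therefore need the tail bound $\|F_{\mu^{\beta}} - F_{\mu^{\beta},k}\|_\infty \le \|\beta\|_\infty^k$ to hold \emph{uniformly} for $\beta$ near $\alpha$; you obtain this by shrinking the neighborhood so that $\|\beta\|_\infty \le (1+\|\alpha\|_\infty)/2 < 1$, which is a legitimate quantitative extraction from the proof of Proposition \ref{unifconv}. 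The paper instead avoids ever estimating $\|F_{\mu^{\beta}} - F_{\mu^{\beta},k}\|_\infty$: it picks $k$ so that $F_{\mu^{\alpha}}$ increases by at most $\varepsilon/2$ across each mesh interval (uniform continuity of $F_{\mu^{\alpha}}$ alone), and then sandwiches $F_{\mu^{\beta}}(y)$ between its values at the adjacent sample points using only the \emph{monotonicity} of $F_{\mu^{\beta}}$. The paper's sandwich is slightly leaner --- no neighborhood-shrinking and no uniform tail estimate are needed, and the choice of $k$ depends on $\alpha$ only through its modulus of continuity --- whereas your version is more quantitative: it yields an explicit local modulus of continuity for the map $\alpha \mapsto F_{\mu^{\alpha}}$ in terms of $\|\alpha\|_\infty$, $N$, and $k$, at the price of invoking the strict inequality $\|\alpha\|_\infty < 1$ in two places rather than one. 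One small nitpick: the bound $\|\beta\|_\infty \le \|\alpha\|_\infty + \|\beta-\alpha\|_\infty$ is the ordinary triangle inequality, not the reverse one.
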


\begin{proof}
Suppose $\alpha \in \simp_N^*$, and let $\varepsilon > 0$. Since $F_{\mu^{\alpha}}$ is uniformly continuous, there exists a positive integer $k$ such that
\[
F_{\mu^{\alpha},k}\left(x + \dfrac{1}{N^k}\right) - F_{\mu^{\alpha},k}(x) = F_{\mu^{\alpha}}\left(x + \dfrac{1}{N^k}\right) - F_{\mu^{\alpha}}(x) \leq \dfrac{\varepsilon}{2}
\]
for every $x \in S_k\setminus\{1\}$. By Lemma \ref{CDFapprox}, there exists a $\delta > 0$ such that $\|F_{\mu^{\alpha},k} - F_{\mu^{\beta},k}\|_{\infty} \leq \varepsilon/2$ whenever $\|\alpha - \beta\|_{\infty} < \delta$. In particular, we have $\left|F_{\mu^{\alpha}}(x) - F_{\mu^{\beta}}(x)\right| \leq \varepsilon/2$ for every $x \in S_k$. Then, for $y \in [x,x + 1/N^k]$ where $x \in S_k\setminus\{1\}$, we find
\[
F_{\mu^{\alpha}}(x) - \dfrac{\varepsilon}{2} \leq F_{\mu^{\beta}}(x) \leq F_{\mu^{\beta}}(y) \leq F_{\mu^{\beta}}\left(x + \dfrac{1}{N^k}\right) \leq F_{\mu^{\alpha}}\left(x + \dfrac{1}{N^k}\right) + \dfrac{\varepsilon}{2}.
\]
It immediately follows that $|F_{\mu^{\alpha}}(y) - F_{\mu^{\beta}}(y)| \leq \varepsilon$ and thus $\|F_{\mu^{\alpha}} - F_{\mu^{\beta}}\|_{\infty} \leq \varepsilon$, as desired.

\end{proof}

We note that the transform in Proposition \ref{transform} is not continuous on the entire simplex $\simp_N$ since the CDF of the measure associated to $\alpha \in \simp_N\setminus\simp_N^*$ is discontinuous. Now let $\mathcal{M}$ be the space of Borel probability measures on $[0,1]$ with the total variation norm, $\|\mu\|_{TV} = \sup_E|\mu(E)|$. We next show that the transform $\alpha \mapsto \mu^{\alpha} : \simp_N^* \rightarrow \mathcal{M}$ is continuous.

\begin{thm}
Let $\alpha \in \simp^*_N$. Then $\beta \rightarrow \alpha$ in $\simp_N$ if and only if $\mu^{\beta} \rightarrow \mu^{\alpha}$ in the total variation norm.
\end{thm}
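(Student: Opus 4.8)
The plan is to prove the two implications separately, in each case translating statements about $\mu^\alpha$ and $\mu^\beta$ into statements about their cumulative distribution functions and the level-$k$ cylinder masses furnished by Proposition \ref{CDF}. I would first dispatch the easy implication, that total-variation convergence forces $\beta\to\alpha$. Since $\alpha\in\simp_N^*$ has every coordinate strictly below $1$, any $\beta$ close enough to $\alpha$ also lies in $\simp_N^*$, so I may assume $\beta\in\simp_N^*$ and hence (by Pei's theorem) that $F_{\mu^\beta}$ is continuous, ruling out atoms at $N$-adic endpoints. Convergence $\mu^\beta\to\mu^\alpha$ in the norm $\sup_E|\mu(E)|$ gives in particular $\mu^\beta(E_n)\to\mu^\alpha(E_n)$ for the level-$1$ cells $E_n=[n/N,(n+1)/N)$; by Proposition \ref{CDF} with $k=1$ these masses are exactly $\beta_n$ and $\alpha_n$, so $|\beta_n-\alpha_n|\to 0$ for each $n$ and therefore $\beta\to\alpha$ in $\simp_N$.

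The substantive implication is the converse. The natural first step is Proposition \ref{transform}, which already yields $\|F_{\mu^\beta}-F_{\mu^\alpha}\|_\infty\to 0$ as $\beta\to\alpha$, hence control of $|\mu^\beta(I)-\mu^\alpha(I)|$ for every interval $I$. I would then attempt to promote this interval-level control to the supremum over all Borel sets by exploiting the self-similar cylinder structure. Fixing a level $k$, I would split an arbitrary Borel $E$ across the $N^k$ cylinders $I_{\vec n}$: the coarse discrepancy $\sum_{\vec n}|\mu^\alpha(I_{\vec n})-\mu^\beta(I_{\vec n})|$ equals $\|\alpha^{\otimes k}-\beta^{\otimes k}\|_1$, which the product estimates underlying Lemma \ref{CDFapprox} bound by $k\|\alpha-\beta\|_1$, a quantity that tends to $0$ for fixed $k$. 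The residual mass inside each cylinder is governed by the self-similarity relation from Equation (\ref{invariance}), namely $\mu^\alpha|_{I_{\vec n}}=\left(\prod_\ell\alpha_{n_\ell}\right)\bigl(\mu^\alpha\circ\varphi_{\vec n}^{-1}\bigr)$, and one would hope to make $k=k(\|\alpha-\beta\|_\infty)$ grow so that both the coarse term and the residual term vanish simultaneously.

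The main obstacle is exactly this last balancing. Uniform convergence of the distribution functions is strictly weaker than convergence in the total-variation norm, and the intra-cylinder residual does not obviously decay: by the self-similarity identity above, each cylinder carries a rescaled copy of the \emph{same} discrepancy $\|\mu^\alpha-\mu^\beta\|_{TV}$, so a naive recursion simply returns the quantity one is trying to estimate. Meanwhile the coarse bound $k\|\alpha-\beta\|_1$ degrades as $k\to\infty$, so the level cannot be pushed freely. Overcoming this requires extracting genuine cancellation from the product structure of $\alpha^{\otimes k}$ against $\beta^{\otimes k}$ and coupling the choice of $k$ quantitatively to $\|\alpha-\beta\|_\infty$; I expect this interplay between the cylinder level and the perturbation size—rather than any single estimate—to be the crux, and the point demanding the most care.
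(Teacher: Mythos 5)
Your first implication (total-variation convergence forces $\beta\to\alpha$) is correct and is exactly the paper's argument: evaluate both measures on the level-one cells and apply Proposition \ref{CDF} with $k=1$. The genuine gap is that the forward implication is never established. Your second and third paragraphs set up the cylinder decomposition, correctly observe that the intra-cylinder residual is a rescaled copy of the very quantity $\|\mu^\alpha-\mu^\beta\|_{TV}$ you are trying to bound, and then stop, hoping that a quantitative coupling of the cylinder level $k$ to $\|\alpha-\beta\|_\infty$ will produce the missing cancellation. No such coupling exists: the obstruction you identified is fatal, not technical. For distinct $\alpha,\beta\in\simp_N^*$ the measures $\mu^\alpha$ and $\mu^\beta$ are mutually singular, since $\mu^\alpha$ is the image of the product measure $\alpha^{\otimes\mathbb{N}}$ on digit sequences and, by the strong law of large numbers (or Kakutani's dichotomy), the set of $x$ whose base-$N$ digits have asymptotic frequency vector $\alpha$ carries full $\mu^\alpha$-mass and zero $\mu^\beta$-mass. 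Hence $\sup_E|\mu^\beta(E)-\mu^\alpha(E)|=1$ whenever $\beta\neq\alpha$, and no estimate along your lines can make it small. Concretely, for $\alpha=(1/2,0,1/2)$ and $\beta=(1/2-\delta,2\delta,1/2-\delta)$ one has $\mu^\alpha(C)=1$ and $\mu^\beta(C)=\lim_k(1-2\delta)^k=0$ for the middle-thirds Cantor set $C$, for every $\delta>0$.

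You should also know that the paper's own proof of the forward implication does not survive this example; it fails at the claim $|\mu^\beta(I_n)-\mu^\alpha(I_n)|\leq\varepsilon\lambda(I_n)$, which is what permits summing over the countably many intervals $I_n$. Uniform closeness of the distribution functions yields only $|\mu^\beta(I_n)-\mu^\alpha(I_n)|\leq 2\|F_{\mu^\beta}-F_{\mu^\alpha}\|_\infty$, with no gain proportional to the length of $I_n$, and the Riemann--Stieltjes sums written there (carrying an extra factor $\Delta x_j$) do not approximate $\mu(I_n)$. A bound of the form $|\mu^\beta(I)-\mu^\alpha(I)|\leq\varepsilon\lambda(I)$ for all intervals would force $\|\mu^\beta-\mu^\alpha\|_{TV}\leq\varepsilon$ and contradict mutual singularity. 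So your inability to close the argument reflects a defect in the statement --- what Proposition \ref{transform} actually delivers is weak-$*$ convergence $\mu^\beta\to\mu^\alpha$, and that is the correct conclusion --- rather than a deficiency of your strategy. As written, however, your proposal proves only the easy implication.
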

\begin{proof}
Suppose $\beta \rightarrow \alpha$ in $\simp_N$. The implication of convergence in the total variation norm follows by proving the result for open intervals and passing to the regularity of the measure; however, the latter details are somewhat technical, so we provide a self-contained proof, herein. Let $\varepsilon > 0$. By Proposition \ref{transform}, there exists a $\delta > 0$ such that $\|F_{\mu^{\beta}} - F_{\mu^{\alpha}}\|_{\infty} < \varepsilon/2$ whenever $\|\beta - \alpha\|_{\infty} < \delta$. Let $\mathcal{O}$ be an open subset of $[0,1]$, and suppose $\{I_n\}_{n=1}^{\infty}$ is the disjoint collection of open intervals whose union is $\mathcal{O}$. Regarding $\mu^{\beta}$ and $\mu^{\alpha}$ as Riemann-Stieltjes measures, given $\eta > 0$, there exists a partition $\mathcal{P} = \{x_j\}$ of $I_n$ such that, by the triangle inequality,
\begin{align*}
\left|\mu^{\beta}(I_n) - \mu^{\alpha}(I_n)\right| &\leq \left|\mu^{\beta}(I_n) - \sum_{\mathcal{P}}(F_{\mu^{\beta}}(x_{j+1}) - F_{\mu^{\beta}}(x_j))\Delta x_j\right| \\
&\hspace{1cm}+ \left|\sum_{\mathcal{P}}(F_{\mu^{\beta}}(x_{j+1}) - F_{\mu^{\alpha}}(x_{j+1}))\Delta x_j\right| \\
&\hspace{1cm}+ \left|\sum_{\mathcal{P}}(F_{\mu^{\alpha}}(x_j) - F_{\mu^{\beta}}(x_j))\Delta x_j\right| \\
&\hspace{1cm}+ \left|\sum_{\mathcal{P}}(F_{\mu^{\alpha}}(x_{j+1}) - F_{\mu^{\alpha}}(x_j))\Delta x_j - \mu^{\alpha}(I_n)\right| \\
&\leq \eta + \varepsilon \lambda(I_n) + \eta
\end{align*}
where $\lambda$ is Lebesgue measure. Since $\eta$ was arbitrary, we have $\left|\mu^{\beta}(I_n) - \mu^{\alpha}(I_n)\right| \leq \varepsilon\lambda(I_n)$ and, thus,
\[
\left|\mu^{\beta}\left(\mathcal{O}\right) - \mu^{\alpha}\left(\mathcal{O}\right)\right| \leq \sum_{n=1}^{\infty}\left|\mu^{\beta}(I_n) - \mu^{\alpha}(I_n)\right| \leq \varepsilon\lambda(\mathcal{O}) \leq \varepsilon.
\]
Now let $E$ be a Borel-measurable subset of $[0,1]$, and let $\eta' > 0$. From the regularity of the measures, see \cite{Bogachev}, there exists an open set $\mathcal{O} \subset [0,1]$ containing $E$ such that
\[
\left|\mu^{\beta}(E) - \mu^{\alpha}(E)\right| \leq \left|\mu^{\beta}(E) - \mu^{\beta}(\mathcal{O})\right| + \left|\mu^{\beta}(\mathcal{O}) - \mu^{\alpha}(\mathcal{O})\right| + \left|\mu^{\alpha}(\mathcal{O}) - \mu^{\alpha}(E)\right| \leq \eta' + \varepsilon + \eta'.
\]
Since $\eta'$ was arbitrary, we have $\left|\mu^{\beta}(E) - \mu^{\alpha}(E)\right| \leq \varepsilon$. This concludes that $\mu^{\beta} \rightarrow \mu^{\alpha}$ in the total variation norm. \\
\\
Conversely, suppose that $\mu^{\beta} \rightarrow \mu^{\alpha}$ in the total variation norm. Then, by Proposition \ref{CDF}, we have
\[
\beta_k = \mu^{\beta}\left[\dfrac{k}{N},\dfrac{k+1}{N}\right] \rightarrow \mu^{\alpha}\left[\dfrac{k}{N},\dfrac{k+1}{N}\right] = \alpha_k,
\]
from which it immediately follows that $\beta \rightarrow \alpha$ in $\simp_N$. \\
\end{proof}

We conclude this section with a discussion of symmetric weighted Cantor measures.  As motivation, note that both CDF's in Figure \ref{CDFs} exhibit rotational symmetry about the point $(1/2,1/2)$.  First, we need a few definitions.  A Borel measure $\mu$ supported in the unit interval $[0,1]$ is said to be {\em symmetric} if $\mu(E) = \mu(1 - E)$ for every Borel-measurable set $E$. Here, if $E$ is Borel-measurable, then $1 - E := \{1 - x\,|\,x \in E\}$ is Borel-measureable since the collection of sets
\[
\left\{E\,\middle|\,1 - E \text{ is Borel-measurable}\right\}
\]
is a $\sigma$-algebra containing the open intervals.  We say that a weight vector $\alpha \in \simp_N$ is {\em palindromic} if $\alpha_{N-1-n} = \alpha_n$ for all $n \in \{0,1,...,N-1\}$.

\begin{thm}
Let $\alpha \in \simp_N$. The measure $\mu^{\alpha}$ is symmetric if and only if $\alpha$ is palindromic.
\end{thm}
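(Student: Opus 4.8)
The plan is to reduce both directions to Hutchinson's uniqueness through a single reflection computation. Introduce the reflection $R\colon[0,1]\to[0,1]$, $R(x)=1-x$, which is an involutive homeomorphism, and let $\tilde\mu:=R_*\mu^{\alpha}$ be the pushforward of $\mu^{\alpha}$, so that $\tilde\mu(E)=\mu^{\alpha}(R^{-1}(E))=\mu^{\alpha}(1-E)$ for every Borel set $E$. With this notation, $\mu^{\alpha}$ is symmetric precisely when $\tilde\mu=\mu^{\alpha}$. Since $R$ carries $[0,1]$ onto itself, $\tilde\mu$ is again a Borel probability measure supported on $[0,1]$, so the whole theorem will follow once I identify which weighted Cantor measure $\tilde\mu$ equals.

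The key computation is the conjugation identity $R\circ\varphi_n=\varphi_{N-1-n}\circ R$, which is read off from $1-(x+n)/N=\big((1-x)+(N-1-n)\big)/N$. Inverting it gives $\varphi_n^{-1}\circ R=R\circ\varphi_{N-1-n}^{-1}$. Applying the invariance relation (\ref{invariance}) to $\mu^{\alpha}(R(E))$ and substituting, I compute
\begin{align*}
\tilde\mu(E) &= \mu^{\alpha}(R(E)) = \sum_{n=0}^{N-1}\alpha_n\,\mu^{\alpha}\big(\varphi_n^{-1}(R(E))\big) \\
&= \sum_{n=0}^{N-1}\alpha_n\,\mu^{\alpha}\big(R(\varphi_{N-1-n}^{-1}(E))\big) = \sum_{n=0}^{N-1}\alpha_n\,\tilde\mu\big(\varphi_{N-1-n}^{-1}(E)\big).
\end{align*}
Reindexing by $m=N-1-n$ rewrites the right-hand side as $\sum_{m=0}^{N-1}\alpha_{N-1-m}\,\tilde\mu(\varphi_m^{-1}(E))$, so $\tilde\mu$ satisfies the invariance relation for the reversed weight vector $\alpha^{\ast}$ given by $\alpha^{\ast}_m=\alpha_{N-1-m}$. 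By the uniqueness clause of Hutchinson's theorem, $\tilde\mu=\mu^{\alpha^{\ast}}$.

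With this identification both implications are immediate, since $\alpha$ is palindromic exactly when $\alpha^{\ast}=\alpha$. If $\alpha$ is palindromic, then $\alpha^{\ast}=\alpha$, hence $\tilde\mu=\mu^{\alpha}$ and $\mu^{\alpha}$ is symmetric. Conversely, symmetry gives $\mu^{\alpha^{\ast}}=\tilde\mu=\mu^{\alpha}$, and I invoke the injectivity of $\alpha\mapsto\mu^{\alpha}$ to conclude $\alpha^{\ast}=\alpha$: the weights are recovered from the measure by reading off $\alpha_k$ as the mass of the $k$-th subinterval, which is the $k=1$ specialization of Proposition \ref{CDF} (equivalently, evaluating the CDF invariance (\ref{CDFinvar}) at $(k+1)/N$ and $k/N$ and using $F_{\mu^{\alpha}}\equiv 0$ on $(-\infty,0]$ and $F_{\mu^{\alpha}}\equiv 1$ on $[1,\infty)$), with the degenerate Dirac case handled directly since distinct point masses are distinct measures.

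I expect the reflection/reindexing step to be the crux: the only real risk is a bookkeeping slip in the conjugation identity or the index flip $n\mapsto N-1-n$, and one must check that $\tilde\mu$ is genuinely a Borel probability measure on $[0,1]$ so that Hutchinson uniqueness applies. The weight-recovery step underpinning the converse is routine, being essentially a one-line consequence of Proposition \ref{CDF}.
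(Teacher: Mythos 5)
Your proof is correct, but it takes a genuinely different route from the paper's. The paper argues directly at the level of the CDF: for palindromic $\alpha$ it uses Proposition \ref{CDF} to verify $\mu^{\alpha}(1-I_{\vec n}) = \mu^{\alpha}(I_{\vec n})$ on the basic $N$-adic intervals, then extends to all open sets by continuity of the measure and to all Borel sets by regularity; the converse reads off $\alpha_{n} = \alpha_{N-1-n}$ from the level-one intervals. You instead push $\mu^{\alpha}$ forward under the reflection $R(x)=1-x$, use the conjugation identity $R\circ\varphi_n = \varphi_{N-1-n}\circ R$ (which checks out) to show that $R_*\mu^{\alpha}$ satisfies the invariance relation (\ref{invariance}) for the reversed weight vector $\alpha^{\ast}$, and invoke Hutchinson's uniqueness to conclude $R_*\mu^{\alpha} = \mu^{\alpha^{\ast}}$; the theorem then reduces to the injectivity of $\alpha\mapsto\mu^{\alpha}$, which you correctly extract from Proposition \ref{CDF} at level $k=1$ together with a separate treatment of the Dirac case. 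Your route buys a cleaner argument — it sidesteps the open-set/regularity extension entirely by leaning on a uniqueness statement already in hand, and it yields the stronger standalone identity $R_*\mu^{\alpha}=\mu^{\alpha^{\ast}}$ for \emph{every} $\alpha\in\simp_N$, not just palindromic ones. The paper's route is more elementary and self-contained in that it only manipulates the CDF machinery built in Section \ref{sec properties}. The only points you should make explicit in a final write-up are that $R_*\mu^{\alpha}$ is a Borel \emph{probability} measure supported on the attractor $[0,1]$ (so that the uniqueness clause genuinely applies, since any scalar multiple also satisfies the invariance relation), and that Proposition \ref{CDF} is stated for $\alpha\in\simp_N^{*}$, so the weight-recovery step does require the Dirac case to be split off as you indicate.
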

\begin{proof}
If $\alpha_n = 1$ for some $n$, then $\mu^{\alpha}$ is a Dirac measure centered at $n/(N - 1)$. As such, the measure is symmetric only when $N = 2n + 1$, when $\alpha$ is palindromic. \\
\\
So we assume otherwise, that is, $\alpha_n < 1$ for all $n$. Suppose $\alpha$ is palindromic.  For any positive integer $k$ and $\vec{n} = (n_0,n_1,...,n_{k-1}) \in \{0,1,...,N-1\}^k$, let $I_{\vec n}$ be the open interval 
\[
I_{\vec{n}} := \left(\dfrac{1}{N^k}\sum_{\ell = 0}^{k-1}n_{\ell}N^{\ell}, \dfrac{1}{N^k}\left[1 + \sum_{\ell = 0}^{k-1}n_{\ell}N^{\ell}\right]\right).
\]
By Proposition \ref{CDF}, we have
\begin{align*}
\mu^{\alpha}\left(1 - I_{\vec n}\right) &= F_{\mu^{\alpha}}\left(1 - \dfrac{1}{N^k}\sum_{\ell=0}^{k-1}n_\ell N^\ell\right) - F_{\mu^{\alpha}}\left(1-\dfrac{1}{N^k}\left[1 + \sum_{\ell=0}^{k-1}n_\ell N^\ell\right]\right) \\
&= F_{\mu^{\alpha}}\left(\dfrac{1}{N^k}\left[1 + \sum_{\ell=0}^{k-1}(N - 1 - n_\ell)N^\ell\right]\right) - F_{\mu^{\alpha}}\left(\dfrac{1}{N^k}\sum_{\ell=0}^{k-1}(N - 1 - n_\ell)N^\ell\right) \\
&= \prod_{\ell=0}^{k-1} \alpha_{N - 1 - n_\ell} = \prod_{\ell=0}^{k-1} \alpha_{n_\ell} \\
&=  F_{\mu^{\alpha}}\left(\dfrac{1}{N^k}\left[1 + \sum_{\ell=0}^{k-1}n_\ell N^\ell\right]\right) - F_{\mu^{\alpha}}\left(\dfrac{1}{N^k}\sum_{\ell=0}^{k-1}n_\ell N^\ell\right) \\
&= \mu^{\alpha}(I_{\vec{n}}).
\end{align*}
As a consequence, any open set satisfies this identity by continuity of the measure. Then, from the regularity of $\mu^{\alpha}$, it follows that the measure is symmetric. \\
\\
Conversely, suppose $\mu^{\alpha}$ is symmetric. By Proposition \ref{CDF}, we have
\[
\alpha_{n_0} = \mu^{\alpha}\left(\dfrac{n_0}{N},\dfrac{n_0+1}{N}\right) = \mu^{\alpha}\left(\dfrac{N-1 - n_0}{N},\dfrac{N-1-n_0 + 1}{N}\right) = \alpha_{N - 1 - n_0}.
\]
Therefore, $\alpha$ is palindromic, completing the proof.

\end{proof}

%The last result of this section is a recursive formula for an orthogonal polynomial system attained via the Gram Schmidt algorithm. Even for general measures, we refer to the elements of the orthogonal system as Legendre polynomials. \\ 

%A edit: clarify what is meant by a Legendre polynomial here

The final observation of this section is a recursive formula for the monic Legendre polynomials associated to any symmetric, finite Borel measure $\mu$ on $[0,1]$, e.g. the ternary Cantor measure.  To be clear, we say that a sequence $(p_0,p_1,\dots)$ is a sequence of {\em Legendre polynomials} ({\em associated to $\mu$}) if each $p_k$ is a polynomial of degree $k$ so that for each $k\neq \ell$, $p_k$ and $p_\ell$ are orthogonal elements of $L_\mu^2[0,1]$.  Note that for each such measure $\mu$, this definition determines the family of Legendre polynomials uniquely up to scaling each polynomial.  
%We say that such a family $(p_0,p_1,\dots)$ is {\em monic} if each $p_k$ is monic, and {\em (ortho)normal} if each $p_k(x)$ has norm $1$ in $L_\mu^2[0,1]$.  %For convenience, we refer to the unique family of orthonormal Legendre polynomials with positive leading coefficients as {\em the} (orthonormal) Legendre polynomials (associated to $\mu$).
%\begin{fact}\label{leg fact}
%Let $\mu$ be any Borel probability measure $\mu$ on $[0,1]$.  Then any family of Legendre polynomials is a basis of $L^2[0,1]$.  Moreover the orthonormal family $(p_0(x), p_1(x),\dots)$ of Legendre polynomials is found by applying the Gram-Schmidt algorithm (in $L_\mu^2[0,1]$) to any sequence $(q_0(x), q_1(x),\dots)$ of polynomials with positive leading coefficients, and of degrees $0,1,2,\dots,$ respectively.  
%\end{fact}
\\
\\
Out of independent interest, we note the following $2$-term recursive formula for Legendre polynomials.

\begin{prop}\label{poly}
%Let $\mu$ be a symmetric Borel probability measure supported on $[0,1]$ and let $\{p_n\}_{n=0}^{\infty}$ be the orthonormal polynomials attained by the Gram Schmidt algorithm applied to $\{(x-1/2)^n\}_{n=0}^{\infty}$ in $L^2_{\mu}[0,1]$ with corresponding monic polynomials $\{m_n\}_{n=0}^\infty$. Then for all nonnegative integers $n$, 
%\[
%m_{n+2} = \left(x - \dfrac{1}{2}\right)m_{n+1} - \dfrac{\|m_{n+1}\|^2}{\|m_{n}\|^2}m_{n}.
%\]
Let $\mu$ be a symmetric, finite Borel measure on $[0,1]$ and let $(m_0,m_1,m_2,\dots)$ be the monic Legendre polynomials associated to $\mu$. Then $m_0(x) = 1$, $m_1(x) = x-1/2$, and $m_n$ alternates parity with respect to the line $x=1/2$.  Moreover, for all nonnegative integers $n$, 
\begin{equation}\label{symm recurrence}
m_{n+2}(x) = \left(x - \dfrac{1}{2}\right)m_{n+1}(x) - \dfrac{\|m_{n+1}\|_\mu^2}{\|m_{n}\|_\mu^2}m_{n}(x).
\end{equation}
\end{prop}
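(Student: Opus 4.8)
The plan is to derive the identity \eqref{symm recurrence} from the classical three-term recurrence for monic orthogonal polynomials, with the symmetry of $\mu$ entering only to pin the drift coefficient at $1/2$ and to supply the parity statement. First I would dispose of the base cases: $m_0 = 1$ is forced by monicity in degree $0$, and writing $m_1 = x - c$, orthogonality to $m_0$ gives $c = \int x\,d\mu / \mu([0,1])$; symmetry yields $\int x\,d\mu = \int (1-x)\,d\mu$, hence $\int x\,d\mu = \tfrac12 \mu([0,1])$ and $c = 1/2$, so $m_1 = x - 1/2$.

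Next I would prove the parity claim, which I interpret as $m_n(1-x) = (-1)^n m_n(x)$. The clean route is a uniqueness argument: set $q_n(x) := (-1)^n m_n(1-x)$. Since the leading term of $m_n(1-x)$ is $(-x)^n = (-1)^n x^n$, each $q_n$ is monic of degree $n$. For $k \neq n$, the substitution $x \mapsto 1-x$ together with the symmetry $\mu(E) = \mu(1-E)$ gives $\langle q_n, q_k\rangle_\mu = (-1)^{n+k}\langle m_n, m_k\rangle_\mu = 0$, so $(q_0, q_1, \dots)$ is itself a sequence of monic Legendre polynomials for $\mu$. By the uniqueness of such a sequence, $q_n = m_n$, which is precisely the stated parity.

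With parity in hand I would invoke the standard recurrence. Expanding the degree-$(n+1)$ monic polynomial $x\,m_n$ in the orthogonal basis $\{m_k\}_{k \le n+1}$ and using $\langle x m_n, m_k\rangle_\mu = \langle m_n, x m_k\rangle_\mu = 0$ whenever $\deg(x m_k) < n$ collapses the expansion to
\[
x\,m_n = m_{n+1} + a_n m_n + b_n m_{n-1}, \qquad a_n = \frac{\langle x m_n, m_n\rangle_\mu}{\|m_n\|_\mu^2}, \quad b_n = \frac{\|m_n\|_\mu^2}{\|m_{n-1}\|_\mu^2},
\]
the value of $b_n$ coming from $\langle m_n, x m_{n-1}\rangle_\mu = \|m_n\|_\mu^2$ since $x m_{n-1}$ is monic of degree $n$. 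Finally, parity forces $m_n(1-x)^2 = m_n(x)^2$, so symmetry gives $\int x\, m_n^2\,d\mu = \int (1-x)\, m_n^2\,d\mu$ and therefore $a_n = 1/2$. Rearranging into $m_{n+1} = (x - \tfrac12)m_n - b_n m_{n-1}$ and shifting the index $n \mapsto n+1$ produces \eqref{symm recurrence}.

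The only point requiring genuine care is the appeal to uniqueness of the monic Legendre sequence and the fact that each $\|m_n\|_\mu > 0$ (so that the ratio in \eqref{symm recurrence} is well defined); both rest on $1, x, \dots, x^n$ being linearly independent in $L_\mu^2[0,1]$, i.e. on $\mu$ not being supported on finitely many points, a hypothesis automatically met by the weighted Cantor measures at hand. I expect this nondegeneracy bookkeeping, rather than any of the algebra, to be the main thing to state carefully.
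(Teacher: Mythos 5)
Your proposal is correct, and the core of the recurrence derivation is the same as the paper's: expand the product of a Legendre polynomial with a linear factor in the orthogonal basis, kill all coefficients below degree $n$ by orthogonality, read off the leading coefficient from monicity, kill the middle coefficient using the symmetry of $\mu$, and identify the remaining coefficient as $\|m_{n+1}\|_\mu^2/\|m_n\|_\mu^2$ via $\langle m_{n+1}, q_1 m_n\rangle = \|m_{n+1}\|_\mu^2$. Where you genuinely diverge is the parity claim: the paper proves parity and the recurrence simultaneously by a single induction on $n$ (parity of $m_{n+2}$ is read off from the recurrence itself), whereas you establish parity globally first, by observing that $q_n(x) := (-1)^n m_n(1-x)$ is again a monic orthogonal sequence and invoking uniqueness. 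Your route is cleaner conceptually and decouples the two claims, at the cost of leaning on the uniqueness of the monic Legendre family (which the paper asserts but which, as you note, requires $1,x,\dots,x^n$ to be linearly independent in $L_\mu^2[0,1]$, i.e.\ $\mu$ not finitely supported). A second cosmetic difference: you multiply by $x$ and compute the drift $a_n = 1/2$ from $\int x\,m_n^2\,d\mu = \tfrac12\|m_n\|_\mu^2$, while the paper multiplies by $x - 1/2$ and argues the corresponding inner product vanishes because the integrand is odd about $x=1/2$; these are the same computation. Your explicit flagging of the nondegeneracy hypothesis ($\|m_n\|_\mu > 0$) is a point the paper leaves implicit and is worth stating.
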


\begin{proof}

For convenience, we denote $q_1(x) := x - 1/2$. By way of the Gram Schmidt algorithm, we generate $m_{n + 1}(x)$ by subtracting off the projections of $q_1(x)m_n(x)$ on each of the monic Legendre polynomials up to degree $n$. For conciseness, we proceed by induction on $n\geq 0$, proving that (A) the parities of $m_n,$ $m_{n+1}$, and $m_{n+2}$ match the parities of $n, $ $n+1,$ and $n+2$, respectively, and that (B) Equation (\ref{symm recurrence}) holds.  %Since Equation (\ref{symm recurrence}) implies that $m_n$ alternates parity, it is sufficient to verify Equation (\ref{symm recurrence}) for each $n\geq 0$.  
\\
\\
We begin with the base case $n=0$.  Clearly $m_0(x) = 1$ and additionally, $m_0$ is even.  Since $\mu$ is symmetric and $q_1(x)m_0(x) = q_1(x)$ is odd, $\langle q_1m_0,m_0\rangle = 0$.  It follows that $m_1(x)$ is a (monic) constant multiple of $q_1(x)$, so $m_1(x) = x-1/2$ and $m_1$ is odd.  Finally, note that $\langle q_1m_1,m_1\rangle = 0$ since $q_1(x)m_1(x)m_1(x)$ is odd and $\mu$ is symmetric.  Since $m_2$ is monic, we need only subtract off the projection of $q_1(x)m_1(x)$ in the $m_0$ direction to find $m_2$.  So   
\begin{align*}
    m_2(x) &= q_1(x)m_1(x) - \dfrac{ \langle q_1m_1,m_0\rangle }{ \|m_0\|_{\mu}^2 }\, m_0(x) \\
    &= \left(x-\dfrac{1}{2}\right)m_1(x) - \dfrac{ \|m_1\|_{\mu}^2 }{ \|m_0\|_{\mu}^2 }m_0(x)
\end{align*}
and in particular, $m_2$ is even, so the base case of the claim holds.  \\
\\ 
Now suppose $n \geq 1$ and that the inductive hypothesis holds for $n-1$. Since $(m_0,m_1,m_2,\dots)$ is an orthogonal basis of $L_\mu^2[0,1]$ and $q_1(x)m_{n+1}(x)$ is a polynomial of degree $n+2$, it follows that we may write 
\[
q_1(x)m_{n+1}(x) = \sum_{k=0}^{n+2}c_km_k(x)
\]
for some constants $c_0,c_1,\dots,c_{n+2}$.  Note first that if $k\leq n-1$, then 
\[
\langle q_1m_{n+1}, m_k\rangle = \langle m_{n+1}, q_1m_k\rangle = 0
\]
since $m_{n+1}$ is orthogonal to any polynomial of degree less than $n+1$.  So $c_k = 0$ and we may write 
\[
q_1(x)m_{n+1}(x) = c_{n+2}m_{n+2}(x) + c_{n+1}m_{n+1}(x) + c_nm_n(x).  
\]
Since $q_1(x)m_{n+1}(x)$ and $m_{n+2}(x)$ are monic polynomials of degree $n+2$ and both $m_{n+1}$ and $m_n$ have lower degree, it follows that $c_{n+2} = 1$.  Finally, since $q_1(x)m_{n+1}(x)$ and $m_{n+1}$ have opposite parity, $\langle q_1m_{n+1}, m_{n+1}\rangle = 0$ and $c_{n+1} = 0$.  So finally 
\begin{align}
\left(x-\dfrac{1}{2}\right)m_{n+1}(x) = q_1(x)m_{n+1}(x) = m_{n+2}(x) + c_nm_n(x).  \label{eq recurrence}
\end{align}
By Equation (\ref{eq recurrence}) and the inductive hypothesis, it follows that the parity of $m_{n+2}$ matches the parity of $n+2$, so claim (A) holds.  For claim (B), it suffices to show that $c_n = \|m_{n+1}\|_{\mu}^2/\|m_n\|_{\mu}^2$.  By rearranging Equation (\ref{eq recurrence}) and considering the projection onto $m_n$, it follows that 
\[
c_n = \dfrac{ \langle q_1m_{n+1},m_n\rangle }{ \|m_n\|_{\mu}^2 } = \dfrac{ \langle m_{n+1},q_1m_n\rangle }{ \|m_n\|_{\mu}^2 }.  
\]
We conclude the calculation by first expanding $q_1(x)m_n(x)$, a polynomial of degree $n+1$, in terms of $m_0,m_1,\dots,m_{n+1}$. Thus $q_1(x)m_n(x) = \sum_{k=0}^{n+1}d_km_k(x)$ for some constants $d_0,d_1,...,d_{n+1}$. By inspecting the leading coefficient, it follows that $d_{n+1} = 1$ and by projecting onto the $m_{n+1}$ direction that $\langle q_1m_n,m_{n+1}\rangle = \|m_{n+1}\|_{\mu}^2$.  So $c_n = \|m_{n+1}\|_{\mu}^2 / \|m_n\|_{\mu}^2$, completing the induction and the proof.  

\end{proof}

Up to a translation factor, Proposition \ref{poly} is a reproduction of Bonnet's recurrence formula when the measure is Lebesgue. The drawback of Theorem \ref{poly} is that the algorithm is dependent on the norm of the monic polynomials.  One method to compute the norm of a polynomial is through the moments of the measure, which is the focus of Section \ref{sec moments}.  In Figure \ref{leg graphs}, we provide the graph of the first six normalized Legendre polynomials for the ternary Cantor measure.

\begin{figure}[ht]
\centering
    \includegraphics[width=1\textwidth]{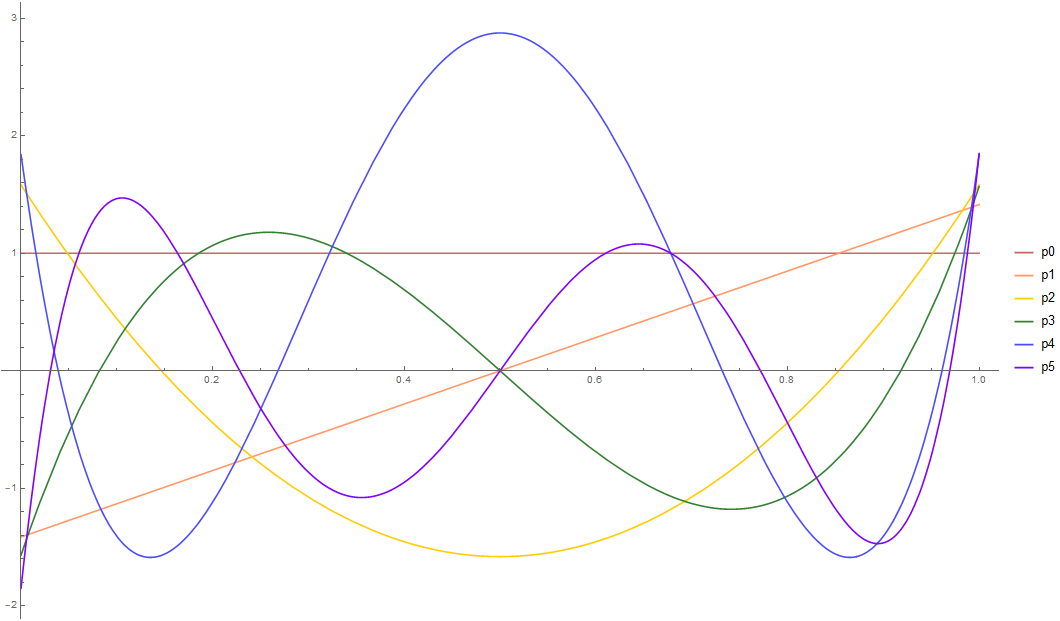}
    \caption{Selected normalized Legendre polynomials for the ternary Cantor measure}
    \label{leg graphs}
\end{figure}

\section{Moments of the weighted Cantor measure}\label{sec moments}
As previously observed, if $\alpha\in\simp_N$ is a standard basis vector, then $\mu^\alpha$ is a Dirac measure, and $L_{\mu^\alpha}^2[0,1]$ is $1$-dimensional.  Therefore, throughout this section, we focus mainly on $\alpha\in\simp_N^\ast$, but several results remain most general.  In this case, integration with respect to $\mu^\alpha$ presents a difficult calculation. One method is to interpret the problem as a Riemann-Stieltjes integral: for $f$ continuous on $[0,1]$,
\[
\int_0^1 f(x)\,d\mu^{\alpha}(x) = \int_0^1 f(x)\,dF_{\mu^{\alpha}}(x).
\]
Recall the sample set
\begin{align*}
S_k &= \left\{\dfrac{1}{N^k}\sum_{\ell=0}^{k-1}n_\ell N^\ell\,\middle|\,n_\ell \in \{0,1,...,N-1\}\right\}\cup\{1\} \\
&= \{0 = x_0 < x_1 < ... < x_{N^k - 1} < x_{N^k} = 1\}.
\end{align*}
By considering a uniform mesh size of $1/N^k$, we obtain the left-endpoint approximation of the above Riemann-Stieltjes integral,
\begin{align}\label{approx}
\sum_{j=0}^{N^k - 1} [F_{\mu^{\alpha}}(x_{j+1}) - F_{\mu^{\alpha}}(x_j)]f(x_j) = \sum_{n_0,n_1,...,n_{k-1}=0}^{N-1}\left(\prod_{\ell=0}^{k-1}\alpha_{n_\ell}\right)f\left(\dfrac{1}{N^k}\sum_{j=0}^{k-1}n_jN^j\right).
\end{align}
%The importance of this quantity is that it appears in one way or another in many results pertaining to moment calculation. 
For any $\alpha\in\simp_N$ and any nonnegative integer $m$, we define the {\em $m$-th moment of $\mu^\alpha$} to be 
\[
I_m^\alpha := \int_0^1x^m\,d\mu^{\alpha}(x).
\]
When the weight vector $\alpha$ is understood, we suppress the superscript on the moment notation. \\
\\
In the following proposition, we derive an invariance identity analogous to the invariance relation in Equation (\ref{invariance}). This identity will be essential for the remainder of the paper.

\begin{prop}
Let $f:[0,1] \rightarrow \mathbb{R}$ be integrable with respect to $\mu^{\alpha}$. Then
\begin{align}\label{identity}
\int_0^1 f(x)\,d\mu^{\alpha}(x) = \sum_{n = 0}^{N-1}\alpha_n\int_0^1(f\circ\varphi_n)(x)\,d\mu^{\alpha}(x)
\end{align}
where $\{\varphi_n\}_{n=0}^{N-1}$ is the associated IFS given by $\varphi_n(x) = (x + n)/N$.
\end{prop}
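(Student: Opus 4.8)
The plan is to recognize the stated identity as the integrated form of the invariance relation in Equation (\ref{invariance}), obtained by the standard measure-theoretic approximation argument. The key observation is that, writing $(\varphi_n)_\ast\mu^\alpha$ for the pushforward measure defined by $((\varphi_n)_\ast\mu^\alpha)(E) = \mu^\alpha(\varphi_n^{-1}(E))$, Equation (\ref{invariance}) says precisely that $\mu^\alpha = \sum_{n=0}^{N-1}\alpha_n(\varphi_n)_\ast\mu^\alpha$ as Borel measures on $[0,1]$. Integrating a fixed function against both sides and using the change-of-variables identity $\int(f\circ\varphi_n)\,d\mu^\alpha = \int f\,d((\varphi_n)_\ast\mu^\alpha)$ for each $n$ then yields Equation (\ref{identity}). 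The entire content is therefore to verify this change-of-variables identity and to justify interchanging the integral with the finite sum over $n$.

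First I would establish Equation (\ref{identity}) for an indicator $f = \mathbf{1}_E$ of a Borel set $E\subseteq[0,1]$. In this case the left-hand side is $\mu^\alpha(E)$, while $\mathbf{1}_E\circ\varphi_n = \mathbf{1}_{\varphi_n^{-1}(E)}$, so the $n$-th term on the right is $\alpha_n\mu^\alpha(\varphi_n^{-1}(E))$; the identity is then exactly Equation (\ref{invariance}). Next I would pass to nonnegative simple functions by linearity of the integral in $f$ (the finite sum over $n$ commutes with finite linear combinations), and then to an arbitrary nonnegative measurable $f$ by choosing an increasing sequence of simple functions $s_j\uparrow f$ and applying the monotone convergence theorem to each of the $N+1$ integrals involved, noting that $s_j\circ\varphi_n\uparrow f\circ\varphi_n$ pointwise for each $n$. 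Finally I would reduce the general integrable case to the nonnegative case by writing $f = f^+ - f^-$ and subtracting the two resulting identities.

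The one point requiring care — and the only place where integrability is genuinely used — is checking that each $f\circ\varphi_n$ is $\mu^\alpha$-integrable, so that the right-hand side is well defined and the decomposition $f = f^+ - f^-$ is legitimate. This follows because every summand is nonnegative: from $\mu^\alpha = \sum_n\alpha_n(\varphi_n)_\ast\mu^\alpha$ together with $\alpha_n\geq 0$ one gets the domination $\alpha_n(\varphi_n)_\ast\mu^\alpha \leq \mu^\alpha$ on every Borel set, whence $\alpha_n\int|f\circ\varphi_n|\,d\mu^\alpha = \alpha_n\int|f|\,d((\varphi_n)_\ast\mu^\alpha) \leq \int|f|\,d\mu^\alpha < \infty$. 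Thus no term on the right can diverge. I expect this integrability bookkeeping to be the main (though mild) obstacle; the remainder is the routine `indicators to simple to nonnegative to general' ascent, and the interchange with the finite sum is automatic since the index set is finite.
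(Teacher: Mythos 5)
Your proposal is correct and follows essentially the same route as the paper: the standard ascent from indicator functions (where the identity is exactly Equation (\ref{invariance})) to simple functions by linearity, to nonnegative measurable functions by monotone convergence, and then to general integrable $f$ by splitting into positive and negative parts. Your extra care in verifying integrability of each $f\circ\varphi_n$ via the domination $\alpha_n(\varphi_n)_\ast\mu^\alpha \leq \mu^\alpha$ is a slightly more explicit justification than the paper's brief remark that the right-hand side is well defined because the $\varphi_n$ are affine, but the argument is otherwise the same.
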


\begin{proof}
Since $\{\varphi_n\}_{n = 0}^{N - 1}$ are affine transformations, we note that the right-hand side of Equation ($\ref{identity})$ is well-defined. The proof follows by a standard bootstrapping argument. First observe, by Equation (\ref{invariance}), that (\ref{identity}) holds for any characteristic function,
\begin{align*}
\int_0^1\chi_E(x)\,d\mu^{\alpha}(x) &= \mu^{\alpha}(E) \\
&= \sum_{n = 0}^{N - 1} \alpha_n\mu^{\alpha}\left(\varphi^{-1}_n(E)\right) \\
&= \sum_{n = 0}^{N - 1}\alpha_n \int_0^1\chi_{\varphi^{-1}_n(E)}(x)\,d\mu^{\alpha}(x) \\
&= \sum_{n = 0}^{N - 1}\alpha_n\int_0^1(\chi_E \circ \varphi_n)(x)\,d\mu^{\alpha}(x).
\end{align*}
Then, by linearity of the integral, Equation (\ref{identity}) holds for simple functions. We attain the identity for nonnegative measurable functions by an application of the Simple Approximation and the Monotone Convergence Theorems; hence, the result follows in general by linearity of the integral.

\end{proof}

We now derive a recurrence relation for the moments of the weighted Cantor measure. We note that the relation exhibits the approximation in (\ref{approx}). While the relation was shown in \cite{JorgKornShu}, the proof of Theorem \ref{moments} as presented in this paper is original.

\begin{thm}\label{moments}
Let $\alpha \in \simp_N$, and let $k$ be a positive integer. Then $I_0 = 1$ and, for all $m \geq 1$,
\begin{equation}\label{recur}
I_m = \sum_{i=0}^{m-1}\binom{m}{i}\dfrac{N^{k(m-i)}}{N^{km}-1}I_i\sum_{n_0,n_1,...,n_{k-1}=0}^{N-1}\left(\prod_{\ell=0}^{k-1}\alpha_{n_\ell}\right)\left(\dfrac{1}{N^k}\sum_{j=0}^{k-1}n_jN^j\right)^{m-i}.
\end{equation}
In particular, 
\begin{equation}\label{moms}
I_m = \dfrac{1}{N^m-1}\sum_{n=0}^{N-1}\alpha_n\sum_{i=0}^{m-1}\binom{m}{i}n^{m-i}I_i.
\end{equation}
\end{thm}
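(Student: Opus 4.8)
The plan is to apply the invariance identity (\ref{identity}) to the monomial $f(x) = x^m$, but using the refined iterated function system afforded by Proposition \ref{equivmeas} in order to produce the parameter $k$. First, $I_0 = \int_0^1 1\,d\mu^\alpha = 1$ since $\mu^\alpha$ is a probability measure. For $m \geq 1$, I would invoke Proposition \ref{equivmeas} to write $\mu^\alpha = \mu^\beta$, where $\beta = \alpha^{\otimes k}$ has associated IFS $\{\psi_n\}_{n=0}^{N^k-1}$ with $\psi_n(x) = (x+n)/N^k$ and $\beta_n = \prod_{\ell=0}^{k-1}\alpha_{n_\ell}$ for $n = \sum_{\ell} n_\ell N^\ell$. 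This reduces everything to a single application of the invariance identity, with the combinatorial content of iterating the original $N$-map system already packaged inside $\beta$.

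Applying (\ref{identity}) to this system with $f(x) = x^m$ gives $I_m = \sum_{n=0}^{N^k-1}\beta_n\int_0^1 \psi_n(x)^m\,d\mu^\alpha(x)$. The central computation is then to expand $\psi_n(x)^m = N^{-km}(x+n)^m = N^{-km}\sum_{i=0}^m \binom{m}{i} n^{m-i} x^i$ by the binomial theorem, interchange the (finite) order of summation, and recognize $\int_0^1 x^i\,d\mu^\alpha = I_i$. This yields the relation
\[
I_m = N^{-km}\sum_{n=0}^{N^k-1}\beta_n\sum_{i=0}^{m}\binom{m}{i}n^{m-i}I_i.
\]

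The main obstacle, and the only genuinely delicate point, is that $I_m$ appears on both sides of this relation: the $i=m$ term of the inner sum contributes $\binom{m}{m}n^{0}I_m = I_m$, and since $\sum_{n=0}^{N^k-1}\beta_n = \prod_{\ell}\bigl(\sum_{n_\ell}\alpha_{n_\ell}\bigr) = 1$, these collectively contribute $N^{-km}I_m$ to the right-hand side. I would isolate this term, move it to the left, and divide through by $(N^{km}-1)/N^{km}$ to solve for $I_m$, obtaining $I_m = (N^{km}-1)^{-1}\sum_{n}\beta_n\sum_{i=0}^{m-1}\binom{m}{i}n^{m-i}I_i$. The recurrence is now well-posed because $I_m$ is expressed purely in terms of the lower moments $I_0,\dots,I_{m-1}$.

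Finally, to recover the stated form (\ref{recur}), I would substitute $n = \sum_{j=0}^{k-1}n_j N^j = N^k\bigl(\tfrac{1}{N^k}\sum_{j}n_j N^j\bigr)$, so that $n^{m-i} = N^{k(m-i)}\bigl(\tfrac{1}{N^k}\sum_{j}n_j N^j\bigr)^{m-i}$, and rewrite the single sum over $n$ as the multi-index sum over $(n_0,\dots,n_{k-1}) \in \{0,\dots,N-1\}^k$ with $\beta_n = \prod_\ell \alpha_{n_\ell}$; this reproduces (\ref{recur}) verbatim. Specializing to $k=1$, where $\beta = \alpha$ and $n$ ranges over $\{0,\dots,N-1\}$, immediately gives (\ref{moms}). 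I note that one could equivalently prove the $k=1$ formula for a general $M$-map system directly and then apply Proposition \ref{equivmeas} with $M = N^k$; the two routes are interchangeable, and I would choose whichever makes the bookkeeping of the digit expansion cleanest.
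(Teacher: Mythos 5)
Your proposal is correct and matches the paper's own argument essentially step for step: both pass to $\beta=\alpha^{\otimes k}$ via Proposition \ref{equivmeas}, apply the invariance identity (\ref{identity}) to $x^m$ for the refined IFS, expand binomially, identify the $i=m$ term as $I_m/N^{km}$ using $\sum_n\beta_n=1$, and solve for $I_m$. The only cosmetic difference is that you factor out $N^{-km}$ up front and restore the digit expansion at the end, whereas the paper keeps the scaled form $\bigl(\tfrac{1}{N^k}\sum_j n_jN^j\bigr)^{m-i}$ throughout; the content is identical.
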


\begin{proof}
Let $\beta = \alpha^{\otimes k}$ as in Proposition \ref{equivmeas}. Recall that we showed that $\mu^{\alpha} = \mu^{\beta}$ where the corresponding IFS for the weighted Cantor measure with respect to $\beta$ is $\{\psi_n\}_{n=0}^{N^k-1}$ given by $\psi_n(x) = (x + n)/N^k$ and $\beta_n = \prod_{\ell = 0}^{k - 1}\alpha_{n_{\ell}}$ where $n = n_0 + n_1N + ... + n_{k - 1}N^{k-1}$. Then, applying Equation (\ref{identity}) with respect to $\mu^{\beta}$, we have
\begin{align*}
\int_0^1 x^m\,d\mu^{\alpha}(x) &= \int_0^1 x^m\,d\mu^{\beta}(x) \\
&= \sum_{n = 0}^{N^k - 1}\beta_n\int_0^1 \left(\dfrac{x + n}{N^k}\right)^m\,d\mu^{\beta}(x) \\
&= \sum_{n_0,n_1,...,n_{k-1}=0}^{N-1}\left(\prod_{\ell=0}^{k-1}\alpha_{n_\ell}\right)\int_0^1 \left(\dfrac{1}{N^k}\left[x + \sum_{j=0}^{k-1}n_jN^j\right]\right)^m\,d\mu^{\alpha}(x).
\end{align*}
Next, we expand the product in the above integrand and rearrange the terms and sums.
\begin{align}
I_m &= \int_0^1 x^m\,d\mu^{\alpha}(x) \nonumber \\
&= \sum_{n_0,n_1,...,n_{k-1}=0}^{N-1}\left(\prod_{\ell=0}^{k-1}\alpha_{n_\ell}\right)\int_0^1 \sum_{i=0}^m \binom{m}{i} \dfrac{x^i}{N^{ki}}\left(\dfrac{1}{N^k}\sum_{j=0}^{k-1}n_jN^j\right)^{m - i}\,d\mu^{\alpha}(x) \nonumber \\
&= \sum_{i=0}^m\binom{m}{i}\dfrac{1}{N^{ki}}\int_0^1x^i\,d\mu^{\alpha}(x)\sum_{n_0,n_1,...,n_{k-1}=0}^{N-1}\left(\prod_{\ell=0}^{k-1}\alpha_{n_\ell}\right)\left(\dfrac{1}{N^k}\sum_{j=0}^{k-1}n_jN^j\right)^{m-i} \nonumber \\
&= \sum_{i=0}^m\binom{m}{i}\dfrac{1}{N^{ki}}I_i\sum_{n_0,n_1,...,n_{k-1}=0}^{N-1}\left(\prod_{\ell=0}^{k-1}\alpha_{n_\ell}\right)\left(\dfrac{1}{N^k}\sum_{j=0}^{k-1}n_jN^j\right)^{m-i}. \label{RiemannStieltjes}
\end{align}
When $i = m$ in (\ref{RiemannStieltjes}) the summand is $I_m/N^{km}$. We subtract this term from the left-hand side of the equation and solve for $I_m$ to attain the desired recurrence relation.

\end{proof}

For a reference to a large number of the moments $I_0, I_1, I_2,\dots$ of the ternary Cantor measure, see the Online Encyclopedia of Integer Sequence \cite{OEIS}.  Since each $I_k$ is rational, the sequence of numerators and denominators appear separately under A308612 and A308613, respectively.  Additionally, moments of the shifted ternary Cantor measure appear under A308614 and A308615. \\ 
\\
Instead of computing the moments recursively, we can individually approximate them from (\ref{approx}). The next result estimates the error of this approximation.

\begin{cor}
Let $\alpha \in \simp_N$ and let $\varepsilon > 0$. Fix an integer $m \geq 1$. If $k \geq \log_N\left(\dfrac{m}{\log(\varepsilon+1)}\right)$, then
\[
0 \leq I_m - \sum_{n_0,n_1,...,n_{k-1}=0}^{N-1}\left(\prod_{\ell=0}^{k-1}\alpha_{n_\ell}\right)\left(\dfrac{1}{N^k}\sum_{j=0}^{k-1}n_jN^j\right)^m < \varepsilon.
\]
\end{cor}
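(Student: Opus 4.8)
The plan is to recognize the displayed sum as the left-endpoint Riemann--Stieltjes approximation of $I_m = \int_{[0,1]} x^m\,d\mu^\alpha$ over the mesh $S_k = \{0 = x_0 < \cdots < x_{N^k} = 1\}$ of uniform width $1/N^k$, and then to control its one-sided error. First I would use Proposition \ref{CDF} to identify, for each cell with left endpoint $x_j = N^{-k}\sum_\ell n_\ell N^\ell$, the weight $\prod_\ell \alpha_{n_\ell}$ with the mass $\mu^\alpha\big((x_j, x_{j+1}]\big)$, so that the approximating sum equals $\sum_{j} x_j^m\,\mu^\alpha\big((x_j, x_{j+1}]\big)$. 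Since the half-open cells partition $(0,1]$ and $\mu^\alpha(\{0\})$ contributes nothing for $m \ge 1$, I can write
\[
I_m - \sum_{j=0}^{N^k-1} x_j^m\,\mu^\alpha\big((x_j, x_{j+1}]\big) = \sum_{j=0}^{N^k-1}\int_{(x_j,x_{j+1}]}\big(x^m - x_j^m\big)\,d\mu^\alpha(x).
\]
Because $x^m$ is nondecreasing on $[0,1]$, every integrand is nonnegative, which immediately yields the left inequality $0 \le I_m - (\text{approximation})$.

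For the upper bound I would estimate each cell from above. On $(x_j, x_{j+1}]$ we have $x \le x_{j+1} = x_j + N^{-k}$, so $x^m - x_j^m \le (x_j + N^{-k})^m - x_j^m$; expanding binomially and using $x_j \le 1$ gives
\[
(x_j + N^{-k})^m - x_j^m = \sum_{i=0}^{m-1}\binom{m}{i}x_j^{\,i}N^{-k(m-i)} \le \sum_{i=0}^{m-1}\binom{m}{i}N^{-k(m-i)} = \big(1 + N^{-k}\big)^m - 1.
\]
Summing against $\mu^\alpha$ over all cells, whose total mass is $1$, bounds the error by $(1+N^{-k})^m - 1$. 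I would then invoke the strict inequality $1 + N^{-k} < e^{N^{-k}}$ (valid since $N^{-k} > 0$ and $m \ge 1$) to obtain error $< e^{m/N^k} - 1$.

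Finally, the hypothesis $k \ge \log_N\!\big(m/\log(\varepsilon+1)\big)$ is precisely $N^k \ge m/\log(\varepsilon+1)$, i.e. $m/N^k \le \log(\varepsilon+1)$, whence $e^{m/N^k} - 1 \le \varepsilon$. Chaining the bounds yields $I_m - (\text{approximation}) \le (1+N^{-k})^m - 1 < e^{m/N^k} - 1 \le \varepsilon$, the strictness being supplied by $1 + N^{-k} < e^{N^{-k}}$. The step I expect to require the most care is producing the multiplicative bound $(1+N^{-k})^m - 1$ rather than the cruder additive bound $m/N^k$ one would get directly from the mean value theorem; it is this exponential form that matches the hypothesis and accounts for the appearance of $\log(\varepsilon+1)$. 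A secondary bookkeeping point is the treatment of endpoint masses when $\alpha \in \simp_N \setminus \simp_N^*$, so that $\mu^\alpha$ may be a Dirac measure, but since any atom sits at a point of $S_k$ and $0^m = 0$ for $m \ge 1$, the half-open-cell decomposition absorbs these cases without change.
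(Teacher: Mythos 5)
Your proof is correct, but it reaches the key bound by a different road than the paper. The paper never decomposes the integral over cells: it reuses the identity labelled (\ref{RiemannStieltjes}) from the proof of Theorem \ref{moments} (itself obtained from the invariance identity (\ref{identity}) applied to $\beta=\alpha^{\otimes k}$), isolates the $i=0$ term, and bounds the inner sums by $I_{m-i}\le 1$ and $I_i\le 1$ to land on $\sum_{i=1}^m\binom{m}{i}N^{-ki}=(1+N^{-k})^m-1\le e^{m/N^k}-1\le\varepsilon$. You instead argue directly: identify the weights $\prod_\ell\alpha_{n_\ell}$ with the cell masses $\mu^\alpha\big((x_j,x_{j+1}]\big)$ via Proposition \ref{CDF}, write the error as $\sum_j\int_{(x_j,x_{j+1}]}(x^m-x_j^m)\,d\mu^\alpha$, and bound each integrand binomially by $(1+N^{-k})^m-1$. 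The two arguments converge on exactly the same closing estimate, so the difference is one of machinery: yours is self-contained and more transparently a Riemann--Stieltjes error bound (and it places the strict inequality where it genuinely lives, at $(1+N^{-k})^m<e^{m/N^k}$, whereas the paper's strictness nominally rests on $I_i<1$, which fails when $\alpha_{N-1}=1$); the paper's is shorter because the heavy lifting was already done in Theorem \ref{moments}, and it applies verbatim to all of $\simp_N$. One small repair to your last remark: Proposition \ref{CDF} is stated for $\simp_N^*$, and an atom of a Dirac measure $\delta_{n/(N-1)}$ with $0<n<N-1$ does \emph{not} lie in $S_k$; the cell identification nevertheless survives for every Dirac measure except $\delta_0$ (the atom falls in exactly one half-open cell, matching the unique nonzero weight), and for $\alpha_0=1$ both $I_m$ and the approximating sum vanish for $m\ge 1$, so the statement is trivially true there. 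With that sentence adjusted, your argument is complete.
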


\begin{proof}
The first inequality follows from the observation that (\ref{approx}) is a lower approximation of the Riemann-Stieltjes integral $I_m$. For the upper bound, we manipulate ($\ref{RiemannStieltjes}$). Specifically, we subtract the term corresponding to $i = 0$ to obtain
\begin{align*}
I_m &- \sum_{n_0,n_1,...,n_{k-1}=0}^{N-1}\left(\prod_{\ell=0}^{k-1}\alpha_{n_\ell}\right)\left(\dfrac{1}{N^k}\sum_{j=0}^{k-1}n_jN^j\right)^m \\
&= \sum_{i=1}^{m}\binom{m}{i}\dfrac{1}{N^{ki}}I_i\sum_{n_0,n_1,...,n_{k-1}=0}^{N-1}\left(\prod_{\ell=0}^{k-1}\alpha_{n_\ell}\right)\left(\dfrac{1}{N^k}\sum_{j=0}^{k-1}n_jN^j\right)^{m-i} \\
&\leq \sum_{i=1}^{m}\binom{m}{i}\dfrac{1}{N^{ki}}I_iI_{m-i}.
\end{align*}
Using $I_i < 1$, we have
\[
\sum_{i=1}^{m}\binom{m}{i}\dfrac{1}{N^{ki}}I_iI_{m-i} < \sum_{i=1}^{m}\binom{m}{i}\dfrac{1}{N^{ki}} = \left(1 + \dfrac{1}{N^k}\right)^m - 1 \leq \exp\left(\dfrac{m}{N^k}\right) - 1 \leq \varepsilon,
\]
as desired.

\end{proof}

We define the \textit{Laplace transform} of a finite measure $\mu$ on $[0,1]$ as the function on $\mathbb{R}$ given by
\[
\mathcal{L}_{\mu}(s) = \int_0^1e^{-sx}\,d\mu(x).
\]
Here, we use the Laplace transform of a weighted Cantor measure to approach the moment problem.

\begin{thm}\label{entire}
Let $\alpha \in \simp_N$. The infinite product
\[
f(z) := \prod_{r=1}^{\infty}\sum_{n=0}^{N-1}\alpha_n\exp\left(-\dfrac{nz}{N^r}\right)
\]
is well-defined for $z \in \mathbb{C}$. Furthermore, $f$ is entire and $\mathcal{L}_{\mu^{\alpha}}(s) = f(s)$ for $s \in \mathbb{R}$.
\end{thm}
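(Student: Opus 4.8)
The plan is to separate the statement into two claims: that the infinite product defines an entire function, and that this function coincides with $\mathcal{L}_{\mu^\alpha}$ on the real line. For the first claim I would write each factor as $g_r(z) = \sum_{n=0}^{N-1}\alpha_n\exp(-nz/N^r)$, a finite sum of exponentials and hence entire, and then exploit $\sum_{n=0}^{N-1}\alpha_n = 1$ to rewrite
\[
g_r(z) - 1 = \sum_{n=0}^{N-1}\alpha_n\left[\exp\!\left(-\dfrac{nz}{N^r}\right) - 1\right].
\]
Using the elementary bound $|e^w - 1| \leq |w|e^{|w|}$, on a disk $|z| \leq R$ I would estimate $|g_r(z) - 1| \leq \frac{(N-1)R}{N^r}\exp\!\big(\frac{(N-1)R}{N^r}\big)$, which for large $r$ is dominated by a constant multiple of $N^{-r}$. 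Therefore $\sum_r \sup_{|z|\leq R}|g_r(z) - 1|$ converges, and by the standard theory of infinite products of holomorphic functions the partial products converge uniformly on compact sets to an entire limit $f$. This is the step I expect to demand the most care, since it is where convergence and holomorphy are actually earned.

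For the identity $\mathcal{L}_{\mu^\alpha}(s) = f(s)$, I would apply the invariance identity (Equation \ref{identity}) to the integrable function $h(x) = e^{-sx}$. Since $h\circ\varphi_n(x) = e^{-sn/N}e^{-sx/N}$, this yields the functional equation
\[
\mathcal{L}_{\mu^\alpha}(s) = \left(\sum_{n=0}^{N-1}\alpha_n e^{-sn/N}\right)\mathcal{L}_{\mu^\alpha}(s/N) = g_1(s)\,\mathcal{L}_{\mu^\alpha}(s/N).
\]
Iterating $k$ times and noting that $g_1(s/N^{r-1}) = \sum_{n}\alpha_n e^{-sn/N^r} = g_r(s)$, I obtain the telescoping relation
\[
\mathcal{L}_{\mu^\alpha}(s) = \left(\prod_{r=1}^{k} g_r(s)\right)\mathcal{L}_{\mu^\alpha}(s/N^k).
\]

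Finally, I would let $k \to \infty$. Because $\mu^\alpha$ is a Borel probability measure supported on $[0,1]$, the integrand $e^{-(s/N^k)x}$ converges uniformly to $1$ on $[0,1]$, so $\mathcal{L}_{\mu^\alpha}(s/N^k) \to 1$. Combined with the convergence of the partial products established in the first part, this forces $\mathcal{L}_{\mu^\alpha}(s) = f(s)$, as desired. The only remaining loose end is bookkeeping: confirming that the factors $g_r(s)$ arising in the iteration are exactly the factors in the definition of $f$, which is immediate from the computation $g_1(s/N^{r-1}) = g_r(s)$ recorded above.
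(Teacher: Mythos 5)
Your proposal is correct and follows essentially the same route as the paper: bound $\sum_r \sup_{|z|\le R}|g_r(z)-1|$ using $\sum_n\alpha_n=1$ to conclude the product converges to an entire function, derive the functional equation $\mathcal{L}_{\mu^\alpha}(s)=g_1(s)\mathcal{L}_{\mu^\alpha}(s/N)$ from the invariance identity, iterate, and let $k\to\infty$. The only differences are cosmetic (you use $|e^w-1|\le|w|e^{|w|}$ where the paper expands the power series and applies Tonelli, and you use uniform convergence of the integrand where the paper invokes bounded convergence), so no further comment is needed.
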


\begin{proof}
Using the triangle inequality, the power series for $\exp(\cdot)$, and Tonelli's theorem, we have
\begin{align*}
\sum_{r = 1}^{\infty}\left|\left[\sum_{n=0}^{N-1}\alpha_n\exp\left(-\dfrac{nz}{N^r}\right)\right] - 1\right| &= \sum_{r = 1}^{\infty}\left|\sum_{n=0}^{N-1}\alpha_n\left[\exp\left(-\dfrac{nz}{N^r}\right) - 1\right]\right| \\
&\leq \sum_{r = 1}^{\infty}\sum_{n=0}^{N-1}\alpha_n\left|\exp\left(-\dfrac{nz}{N^r}\right) - 1\right| \\
&= \sum_{r = 1}^{\infty}\sum_{n=0}^{N-1}\alpha_n\left|\sum_{k=1}^{\infty}\dfrac{(-1)^kn^kz^k}{N^{rk}k!}\right| \\
&\leq \sum_{r = 1}^{\infty}\sum_{n=0}^{N-1}\alpha_n\sum_{k=1}^{\infty}\dfrac{n^k|z|^k}{N^{rk}k!} \\
&= \sum_{n=0}^{N-1}\alpha_n\sum_{k = 1}^{\infty}\sum_{r=1}^{\infty}\dfrac{n^k|z|^k}{N^{rk}k!} \\
&= \sum_{n=0}^{N-1}\alpha_n\sum_{k = 1}^{\infty}\dfrac{n^k|z|^k}{(N^k - 1)k!}.
\end{align*}
We observe that the last sum converges by the ratio test. From this, it follows that $f$ is well-defined and entire. \\
\\
Applying Equation (\ref{identity}) to $f(x) = e^{-sx}$, we find
\[
\mathcal{L}_{\mu^{\alpha}}(s) = \sum_{n=0}^{N-1}\alpha_n\int_0^1\exp\left(-s\left[\dfrac{x+n}{N}\right]\right)\,d\mu^{\alpha}(x) = \mathcal{L}_{\mu^{\alpha}}\left(\dfrac{s}{N}\right)\sum_{n=0}^{N-1}\alpha_n\exp\left(-\dfrac{ns}{N}\right).
\]
Then, from an argument by induction, we have
\[
\mathcal{L}_{\mu^{\alpha}}(s) = \mathcal{L}_{\mu^{\alpha}}\left(\dfrac{s}{N^k}\right)\prod_{r=1}^k\sum_{n=0}^{N-1}\alpha_n\exp\left(-\dfrac{ns}{N^r}\right).
\]
From the Bounded Convergence Theorem, we have $\lim_{k\to\infty}\mathcal{L}_{\mu^{\alpha}}\left(\dfrac{s}{N^k}\right) = 1$, and the desired identity follows.

\end{proof}

The \textit{moment generating function} (MGF) $G_\alpha(s)$ is defined analogously, 
\begin{align}\label{gen}
G_{\alpha}(s) := \mathcal{L}_{\mu^\alpha}(-s).
\end{align}
It can be seen that
\[
G_{\alpha}(s) = \sum_{ m=0 }^\infty I_m\frac{s^m}{m!}.
\]
We may derive many interesting identities from $G_\alpha(s)$, such as the following recurrence relation.

\begin{prop}\label{palin mom}
Let $\alpha \in \simp_N$ be palindromic, and let $m$ be an odd integer. Then
\[
I_m = \dfrac{1}{2}\sum_{k=0}^{m-1}(-1)^k\binom{m}{k}I_k.
\]
\end{prop}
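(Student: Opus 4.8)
The plan is to exploit the symmetry of the measure $\mu^\alpha$, which holds precisely because $\alpha$ is palindromic (by the theorem just proved). Since $\mu^\alpha$ is symmetric, we have $\mu^\alpha(E) = \mu^\alpha(1-E)$ for every Borel set $E$, which translates into the functional identity $I_m = \int_0^1 x^m\,d\mu^\alpha(x) = \int_0^1 (1-x)^m\,d\mu^\alpha(x)$. This single observation is really the crux; once it is in hand, the result is a matter of expanding the binomial and rearranging.

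First I would establish the reflection identity $\int_0^1 x^m\,d\mu^\alpha = \int_0^1(1-x)^m\,d\mu^\alpha$. The cleanest route is through the change of variables $x \mapsto 1-x$ together with symmetry: for a symmetric measure on $[0,1]$, integrating a function $g$ against $\mu^\alpha$ equals integrating $g(1-x)$ against $\mu^\alpha$. Then I would expand the right-hand side using the binomial theorem:
\begin{equation*}
\int_0^1(1-x)^m\,d\mu^\alpha(x) = \sum_{k=0}^m \binom{m}{k}(-1)^k\int_0^1 x^k\,d\mu^\alpha(x) = \sum_{k=0}^m (-1)^k\binom{m}{k}I_k.
\end{equation*}

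Setting this equal to $I_m$ gives $I_m = \sum_{k=0}^m(-1)^k\binom{m}{k}I_k$. The next step is to isolate the $k=m$ term on the right. Since $m$ is odd, the top term is $(-1)^m\binom{m}{m}I_m = -I_m$, so the equation becomes $I_m = -I_m + \sum_{k=0}^{m-1}(-1)^k\binom{m}{k}I_k$. Solving for $I_m$ yields $2I_m = \sum_{k=0}^{m-1}(-1)^k\binom{m}{k}I_k$, and dividing by $2$ produces exactly the claimed formula. The oddness of $m$ is used precisely to make the $(-1)^m$ coefficient equal to $-1$, which is what forces the $I_m$ terms onto the same side and leaves the clean factor of $1/2$.

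The main obstacle is essentially cosmetic rather than conceptual: justifying the reflection identity with enough rigor that it does not merely assert the symmetry but derives the integral transformation from the measure-theoretic definition $\mu^\alpha(E)=\mu^\alpha(1-E)$. I would handle this by the standard bootstrapping argument (as used earlier in the excerpt for Equation (\ref{identity})): verify $\int g\,d\mu^\alpha = \int g(1-\cdot)\,d\mu^\alpha$ first for characteristic functions $g = \chi_E$, where it is immediate from $\mu^\alpha(E)=\mu^\alpha(1-E)$, then extend to simple functions by linearity, to nonnegative measurable functions by the Monotone Convergence Theorem, and finally to $g(x)=x^m$ in general. Everything after this identity is routine algebra, so I expect no real difficulty beyond keeping the binomial bookkeeping straight.
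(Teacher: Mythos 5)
Your proof is correct, but it reaches the key identity $I_m = \sum_{k=0}^m(-1)^k\binom{m}{k}I_k$ by a different route than the paper. The paper works on the generating-function side: it takes the infinite product formula for $G_\alpha$ from Theorem \ref{entire}, uses palindromicity to reindex $n \mapsto N-1-n$ inside each factor, extracts $\exp((N-1)s/N^r)$ from each factor so that the product telescopes to $e^s$, and thereby obtains the functional equation $G_\alpha(s) = e^sG_\alpha(-s)$; comparing power-series coefficients then yields the identity. You instead work directly on the measure side, invoking the earlier theorem that $\mu^\alpha$ is symmetric exactly when $\alpha$ is palindromic, deducing the reflection identity $\int_0^1 x^m\,d\mu^\alpha = \int_0^1(1-x)^m\,d\mu^\alpha$ by the standard bootstrapping argument, and expanding the binomial. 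The two arguments encode the same underlying symmetry --- the paper's functional equation $G_\alpha(s)=e^sG_\alpha(-s)$ is precisely the reflection identity applied to the test function $e^{sx}$ --- but yours is more elementary in that it bypasses Theorem \ref{entire} and the infinite product entirely, needing only the symmetry characterization from Section \ref{sec properties}; the paper's version buys consistency with its surrounding development of the moment generating function and avoids having to re-justify the change of variables for general integrands. The final step (isolating the $k=m$ term, using oddness of $m$ to get $(-1)^m = -1$, and solving $2I_m = \sum_{k=0}^{m-1}(-1)^k\binom{m}{k}I_k$) is identical in both.
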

\begin{proof}
From Theorem \ref{entire} and the assumption that $\alpha$ is palindromic, we find
\begin{align*}
G_{\alpha}(s) &= \prod_{r=1}^{\infty}\sum_{n=0}^{N-1}\alpha_{N-1-n}\exp\left(\dfrac{(N-1-n)s}{N^r}\right) \\
&= \prod_{r=1}^{\infty}\exp\left(\dfrac{(N-1)s}{N^r}\right)\sum_{n=0}^{N-1}\alpha_n\exp\left(-\dfrac{ns}{N^r}\right) \\
&= e^sG_{\alpha}(-s).
\end{align*}
This identity, in terms of the power series expansion of $G_{\alpha}(s)$ and of $e^s$, is then
\[
\sum_{m=0}^{\infty}\dfrac{I_m}{m!}s^m = \left(\sum_{m=0}^{\infty}\dfrac{1}{m!}s^m\right)\left(\sum_{m=0}^{\infty}(-1)^m\dfrac{I_m}{m!}s^m\right) = \sum_{m=0}^{\infty}\left(\sum_{k=0}^m\dfrac{(-1)^kI_k}{k!(m-k)!}\right)s^m.
\]
From the uniquess of the coefficients, we have
\[
I_m = \sum_{k=0}^m(-1)^k\binom{m}{k}I_k
\]
from which the desired identity follows.

\end{proof}

Viewing $G_{\alpha}$ as a function on $\mathbb{C}$, i.e. $G_{\alpha}(z) = f(-z)$ for $f$ in Theorem \ref{entire}, we note that $G_\alpha$ is entire. A useful consequence of this viewpoint is in estimating the moments. Specifically, we consider the partial product approximations defined for all nonnegative integers $k$,
\[
G_{\alpha;k}(z) := \prod_{r = 1}^k\sum_{n=0}^{N-1}\alpha_n\exp\left(\dfrac{nz}{N^r}\right) = \sum_{m=0}^\infty I_{m;k}\frac{z^m}{m!}.
\]
For any nonnegative integer $m$, we note that $0\leq I_{m;k}\nearrow I_m$.  Indeed, this follows immmediately from the fact that $G_{\alpha;0}(z) = 1$ and, for all $k\geq 0$, $G_{\alpha;k+1}(z)$ is the product of $G_{\alpha;k}(z)$ and a power series centered at $z=0$ with nonnegative coefficients and constant term $1$.  

% make two observations. 
%Expanding the product, we observe that $I_{m;k}$ are nonnegative. 

%To see that $I_{m;k}\nearrow I_k$, we Next we differentiate $G_{\alpha;k+1}$ and then evaluate at $z = 0$ to obtain
%\[
%I_{m;k+1} = \sum_{l = 0}^m\dfrac{1}{N^{(k+1)(m - l)}}\binom{m}{l}I_{l;k}\sum_{n = 0}^{N - 1}\alpha_nn^{m-l} \geq I_{m;k}.
%\]
%Thus, we observe that $I_{m;k}\nearrow I_m$ as $k %\rightarrow \infty$.

\begin{thm}\label{cauchy est}
Let $\alpha \in \simp_N$. For any positive integers $m,k$ with $m \geq 2$, 
\[
\left|I_{m}-I_{m;k}\right| \leq \dfrac{em\sqrt{m-1}}{N^k}.  
\]
\end{thm}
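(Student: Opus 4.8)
The plan is to recover $I_m - I_{m;k}$ as a Taylor coefficient of the entire function $G_\alpha - G_{\alpha;k}$ and to control it by a Cauchy-type estimate optimized in the radius. Since $0 \le I_{m;k}\nearrow I_m$, the function $g := G_\alpha - G_{\alpha;k} = \sum_{j\ge 0}(I_j - I_{j;k})\frac{z^j}{j!}$ is entire with nonnegative Taylor coefficients. Hence for every real $R>0$ each term of $g(R)$ is nonnegative, so $g(R) \ge (I_m - I_{m;k})\frac{R^m}{m!}$, which gives the one-sided Cauchy bound
\[
I_m - I_{m;k} \le \frac{m!}{R^m}\bigl(G_\alpha(R) - G_{\alpha;k}(R)\bigr)
\]
for every $R>0$. (Equivalently, nonnegativity of the coefficients forces $\max_{|z|=R}|g(z)| = g(R)$, so this is the usual Cauchy estimate.)

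The next step is to bound the right-hand factor by a quantity of order $R e^R/N^k$. Writing $P_r(z) = \sum_{n=0}^{N-1}\alpha_n\exp(nz/N^r)$ as in Theorem \ref{entire}, for real $R\ge 0$ the weighted average obeys $1 \le P_r(R) \le \exp\bigl((N-1)R/N^r\bigr)$. Multiplying over $1\le r\le k$ and summing the geometric series yields the sharp one-sided bound $G_{\alpha;k}(R) \le \exp\bigl(R(1-N^{-k})\bigr)$, while multiplying over $r>k$ gives $1 \le \prod_{r>k}P_r(R) \le \exp(R/N^k)$. Since $G_\alpha = G_{\alpha;k}\prod_{r>k}P_r$, we get
\[
G_\alpha(R) - G_{\alpha;k}(R) = G_{\alpha;k}(R)\Bigl(\prod_{r>k}P_r(R) - 1\Bigr) \le e^{R(1-N^{-k})}\bigl(e^{R/N^k}-1\bigr) = e^R\bigl(1 - e^{-R/N^k}\bigr).
\]
Applying $1-e^{-t}\le t$ with $t = R/N^k$ then produces the clean estimate $G_\alpha(R)-G_{\alpha;k}(R) \le Re^R/N^k$, so that $I_m - I_{m;k} \le \frac{m!}{N^k}\cdot\frac{e^R}{R^{m-1}}$.

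Finally I would optimize in $R$: the function $e^R/R^{m-1}$ is minimized at $R=m-1$ (legitimate since $m\ge 2$ makes $R\ge 1$), where it equals $e^{m-1}/(m-1)^{m-1}$. This reduces the claim to the purely numerical inequality $\frac{m!\,e^{m-1}}{(m-1)^{m-1}}\le e\,m\sqrt{m-1}$, which follows by writing $m! = m\,(m-1)!$ and invoking the Stirling bound $(m-1)!\le e\sqrt{m-1}\,((m-1)/e)^{m-1}$ (valid for $m-1\ge 1$); the powers of $m-1$ and $e$ then cancel exactly, and nonnegativity of $I_m-I_{m;k}$ upgrades the inequality to the stated absolute-value bound. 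I expect the middle paragraph to be the main obstacle: extracting the factor $N^{-k}$ cleanly hinges on pairing the sharp estimate $G_{\alpha;k}(R)\le e^{R(1-N^{-k})}$ with the tail bound $\prod_{r>k}P_r(R)\le e^{R/N^k}$, whereas the cruder split that bounds $G_{\alpha;k}(R)$ by $e^R$ leaves a stray factor $e^{R/N^k}$ that is uncontrolled at the optimal radius $R=m-1$.

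As a consistency check, one can note the even sharper elementary bound $I_m - I_{m;k}\le m/N^k$: the moment $I_{m;k}$ is exactly the $m$-th moment of the discrete measure in (\ref{approx}), each of whose atoms is the left endpoint of a length-$N^{-k}$ interval carrying the matching $\mu^\alpha$-mass, so comparing $x^m$ to its value at that endpoint via $|x^m-y^m|\le m|x-y|$ on $[0,1]$ and summing the masses (which total $1$) gives $m/N^k$; since $e\sqrt{m-1}\ge 1$ for $m\ge 2$, this already implies the stated estimate.
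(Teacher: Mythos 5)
Your main argument is essentially the paper's proof: the same factorization $G_\alpha(z) = G_{\alpha;k}(z)\,G_\alpha(z/N^k)$, the same Cauchy-type coefficient bound exploiting nonnegativity of the Taylor coefficients, the same estimate $G_\alpha(R)-G_{\alpha;k}(R)\le e^R\bigl(1-e^{-R/N^k}\bigr)\le Re^R/N^k$, and the same optimization at $R=m-1$ followed by the Stirling bound. (The paper extracts the coefficient via the Cauchy integral formula and then evaluates both maxima at $z=R$ using nonnegativity; your single-term lower bound on $g(R)$ is an equivalent shortcut, and your identification of the tail product $\prod_{r>k}P_r(z)$ with $G_\alpha(z/N^k)$ is exactly the paper's Equation (\ref{gk identity}).) The one genuine departure is your closing ``consistency check,'' which deserves more credit than you give it: since $I_{m;k}$ is precisely the left-endpoint sum in (\ref{approx}) and Proposition \ref{CDF} assigns mass $\prod_{\ell}\alpha_{n_\ell}$ to each length-$N^{-k}$ interval, the estimate $0\le I_m-I_{m;k}\le m/N^k$ is a complete, strictly sharper, and far more elementary proof of the theorem, requiring no complex analysis, no optimization in $R$, and no Stirling approximation.
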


\begin{proof}
We first verify the following as an identity of formal power series, for all positive integers $k$.  
\begin{equation}\label{gk identity}
G_{\alpha}(z)% - G_{\alpha;k}(z) 
= G_{\alpha;k}(z)\cdot 
%\left(
G_{\alpha}\left(\dfrac{z}{N^{k}}\right)% - 1\right)
\end{equation}
Indeed, 
\begin{align*}
    G_{\alpha;k}(z) \cdot G_{\alpha}\left( \dfrac{z}{N^k} \right) = \left( \prod_{r=1}^k\sum_{n=0}^{N-1}\alpha_n\exp\left(  \dfrac{nz}{N^r}\right) \right)\cdot \left( 
    \prod_{r=k+1}^\infty\sum_{n=0}^{N-1}\alpha_n\exp\left(  \dfrac{nz}{N^r}\right)
    \right) = \prod_{r=1}^\infty\sum_{n=0}^{N-1}\alpha_n\exp\left(  \dfrac{nz}{N^r}\right)
\end{align*}
so by definition of $G_\alpha(z)$, Equation (\ref{gk identity}) holds as formal power series.  To see that Equation (\ref{gk identity}) holds analytically, it is sufficient to note that both $G_{\alpha;k}(z)$ and $G_\alpha(z)$ are entire functions.  \\
\\
Now let $m,k$ be positive integers with $m\geq 2$ and let $R > 0$.  By subtracting $G_{\alpha;k}(z)$ from both sides of Equation (\ref{gk identity}), we obtain the analytic identity 
\begin{equation}\label{gk difference}
G_{\alpha}(z) - G_{\alpha;k}(z) = G_{\alpha;k}(z)\left( G_{\alpha}\left( \dfrac{z}{N^k} \right) -1 \right).  
\end{equation}
Since the coefficients of $z^m$ in $G_\alpha(z)$ and $G_{\alpha;k}(z)$ are $I_m/m!$ and $I_{m;k}/m!$, respectively, it follows from the Cauchy integral formula that 
\begin{align*}
    \left|I_{m}-I_{m;k}\right| = \frac{m!}{2\pi} \left| \, \int_{ |z| = R }
    \frac{G_\alpha(z) - G_{\alpha;k}(z)}{z^{m+1}}
    \,dz
    \right| \leq \frac{m!}{R^m}\,\max_{|z|=R}
    \left| 
    G_{\alpha;k}(z)\right|
    \cdot 
    \max_{|z|=R}
    \left|
    G_\alpha\left(\dfrac{z}{N^{k}}\right) - 1
    \right|.
\end{align*}
Note that $G_{\alpha;k}(z)$ is a power series with nonnegative coefficients, so it follows that the first maximum is attained by setting $z=R$.  Likewise since $G_\alpha(z/N^k)(z)-1$ is a power series with nonnegative coefficients, it follows also that the second maximum is attained by setting $z = R$. Continuing the calculation, 
\begin{align*}
    |I_m - I_{m;k}| &\leq \frac{m!}{R^m}\cdot G_{\alpha;k}(R)\left( G_\alpha\left(\dfrac{R}{N^{k}}\right) - 1 \right)\\
    &\leq \frac{m!}{R^m}\cdot\exp\left(R\left(1-\dfrac{1}{N^k}\right) \right)
    \left(
    \exp\left( 
    \dfrac{R}{N^{k}}
    \right)-1
    \right) \\
    &= \dfrac{m!}{R^m}\cdot e^R \left(1 - \exp\left(-\dfrac{R}{N^k}\right)\right) \\
    &\leq \dfrac{m!}{N^k}  \cdot \dfrac{e^R}{R^{m-1}} \\
    &= \dfrac{m!}{N^k}\cdot f(R)
\end{align*}
where $f:(0,\infty)\to\mathbb{R}$ is defined by $f(x):= e^x x^{1-m}$ and the second inequality follows by noting that $G_\alpha(x) \leq G_{(0,0,...,1)}(x) = e^x$ for $x > 0$ and similarly for $G_{\alpha;k}$.  We minimize this upper bound (for $m,k$ fixed) using elementary calculus.  Note first that $f(x)$ is differentiable on $(0,\infty)$ and $f(x)\to \infty$ as $x$ tends to either endpoint.  Since 
\[
f'(x) = e^x x^{-m}\left( (1-m) + x \right), 
\]
it follows that $f$ is minimized at $x = m-1$.  Evaluating $f(m-1)$, we have
\[
|I_m - I_{m;k}| \leq \dfrac{m!}{N^k}\cdot \left(\dfrac{e}{m-1}\right)^{m-1}.
\]
Applying the Stirling approximation $(m-1)!\leq e\sqrt{m-1}\cdot \left( \dfrac{m-1}{e} \right)^{m-1}$, the desired identity holds.  

\end{proof}

\begin{rem}\label{shifted MGF}
For any palindromic weight vector $\alpha$, it is suitable to alternatively define the moment generating function under $\nu^{\alpha}$, the measure defined by shifting $\mu^\alpha$ from $[0,1]$ to $[-1/2,1/2]$.  Then the moment generating function with respect to $\nu^{\alpha}$ satisfies
\begin{align*}
\sum_{m=0}^\infty J_m\frac{s^m}{m!} = H_\alpha(s):= \int_{ -1/2}^{1/2} e^{sx} \,d\nu^{\alpha}(x) = \int_0^1 e^{s(x - 1/2)}\,d\mu^\alpha(x) =   e^{-s/2}G_\alpha(s).  
\end{align*}
With some careful manipulation, we may then analogously define $H_{\alpha;k}(s) = \sum_{m=0}^\infty J_{m;k}\frac{s^m}{m!}$ as the partial product $\prod_{r=1}^k e^{ \frac{-(N-1)s}{2N^r} } \sum_{n = 0}^{N-1}\alpha_n \exp\left( \frac{n s}{N^r} \right)$.  Distributing, each product is now a weighted average of hyperbolic cosines of the form $\cosh ( \frac{\ell s}{N^r} )$, where each $\ell$ is a half integer between $0$ and $N/2$.
\end{rem}

\begin{rem}\label{algorithm}
For any $m\geq 0$, we may estimate the coefficients $I_1,\dots,I_m$ $($or $J_1,\dots,J_m$ for a palindromic weight vector$)$ within uniform error at most $\varepsilon> 0$ in $O(\log\log(1/\varepsilon)\cdot m\log m)$.  This is a substantial improvement when compared to the exact computation of each $I_1,\dots,I_m$ $($or $J_1,\dots,J_m)$ from Proposition \ref{palin mom}, which runs in $O(n^2)$.  \\
\\We describe the details for moments under $\mu^\alpha$.  First, apply Theorem \ref{cauchy est} to select $k = k(\varepsilon) = O(\log(1/\varepsilon))$ so that $|I_m-I_{m;k}|\leq \varepsilon$.  Writing $f(s) = \sum_{n = 0}^m c_ns^n$ for the truncation of $\sum_{ \ell = 0 }^{N-1}\alpha_\ell\exp\left( \frac{\ell s}{N}\right)$ to degree $m$, it follows that $G_{\alpha;k}(s)$ and $F(s) := f(s)f(s/N)\cdots f(s/N^{k-1})$ have identical coefficients up to degree $m$.  For algorithmic simplicity, we may assume that $k$ is a power of two, but this assumption may be circumvented with some care, or absorbed as a factor in $O(\log(1/\varepsilon))$. We provide the following pseudocode. \\
\begin{enumerate}%[(a)]
    \item $F(s) \leftarrow \sum_{n=0}^m \sum_{ \ell =0 }^{N-1} \alpha_\ell\cdot (s\ell/N)^n / n! $. 
    \item $j\rightarrow 0$. 
    \item \label{alg goto} If $2^j = k$, go to step (\ref{alg end}). 
    \item $F(s)\rightarrow F(s)\cdot F(s/N^{2^j})$. 
    \item Truncate $F(s)$ to degree $m$ in $s$. 
    \item $j\rightarrow j+1$.  
    \item Go to step (\ref{alg goto}).  
    \item \label{alg end} Return $F(s)$
\end{enumerate}
Since a successful termination performs $\log_2(k)$ products of degree $m$ polynomials, by using a Fast Fourier Transform, the overall complexity is reduced to $O(\log\log(1/\varepsilon)\cdot m\log m)$, as desired.
\end{rem}

In \cite{Grabner}, Grabner and Prodinger investigated measures whose distributions are given by Cantor sets and are somewhat similar to the ternary Cantor measure yet in general do not arise from an IFS.  The major result in their paper is the following asymptotic behavior of the corresponding moments,
\[
I_m = F(\log_{1/\theta}m)m^{-\log_{1/\theta}(2)}\left(1 + O\left(\dfrac{1}{m}\right)\right)
\]
where $F(x)$ is a periodic function of period 1 and known Fourier coefficients. In regards to this paper, the ternary Cantor measure is ascertained by letting $\theta = 1/3$. The final result of this paper is a lower bound approximation for the rate of decay of the moments for a weighted Cantor measure. It is intriguing that the bound that we obtain is precisely of the same order as the result of Grabner and Prodinger.

\begin{thm}
Let $\alpha \in \simp_N^*$.  If $\alpha_{N-1} = 0$, then $I_m\leq \left( \dfrac{N-1}{N} \right)^m$ for $m \geq 0$.  Otherwise, there exists a constant $C(\alpha) > 0$ such that $I_m\geq C(\alpha)m^{-\gamma}$ for all $m \geq 1$ where $\gamma = \log_N(1/\alpha_{N-1})$.
\end{thm}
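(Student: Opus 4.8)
My plan is to localize the mass of $\mu^\alpha$ near the relevant endpoint using Proposition \ref{CDF}, treating the two cases separately.

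The case $\alpha_{N-1}=0$ is immediate. Applying Proposition \ref{CDF} with $k=1$ and $n_0 = N-1$ gives $\mu^\alpha\big((\tfrac{N-1}{N},1]\big) = \alpha_{N-1} = 0$, so $\mu^\alpha$ assigns full mass to $[0,\tfrac{N-1}{N}]$. Since $x^m \le (\tfrac{N-1}{N})^m$ for every $x \in [0,\tfrac{N-1}{N}]$, integrating over the support gives $I_m \le (\tfrac{N-1}{N})^m$ at once.

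For $\alpha_{N-1}>0$ the idea is that $\mu^\alpha$ concentrates near $x=1$ at a geometric rate set by $\alpha_{N-1}$; note that since $\alpha\in\simp_N^*$ we have $0<\alpha_{N-1}<1$, so $\gamma>0$ and $m^{-\gamma}$ is genuine decay. Taking $n_0 = \cdots = n_{k-1} = N-1$ in Proposition \ref{CDF} shows that the interval $[1 - N^{-k},1]$ carries mass exactly $\alpha_{N-1}^k$, so restricting the integral defining $I_m$ to this interval and using $x^m \ge (1 - N^{-k})^m$ there yields the family of lower bounds
\[
I_m \ge \Big(1 - \tfrac{1}{N^k}\Big)^{m}\,\alpha_{N-1}^{\,k} \qquad\text{for every } k \ge 1.
\]
I would then optimize over $k$ by tying it to $m$: choosing $k = \lceil \log_N m \rceil$ forces $N^k \ge m$, hence $(1 - N^{-k})^m \ge (1 - 1/m)^m \ge 1/4$ for $m \ge 2$, while $k \le \log_N m + 1$ combined with the identity $\alpha_{N-1}^{\log_N m} = m^{\log_N \alpha_{N-1}} = m^{-\gamma}$ and the monotonicity of $t\mapsto \alpha_{N-1}^{\,t}$ gives $\alpha_{N-1}^k \ge \alpha_{N-1}\, m^{-\gamma}$. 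Multiplying the two estimates produces $I_m \ge (\alpha_{N-1}/4)\, m^{-\gamma}$ for $m \ge 2$, and the single remaining value $m=1$ (where $I_1 > 0$) is absorbed by setting $C(\alpha) = \min\{\alpha_{N-1}/4,\, I_1\}$.

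The argument is elementary, and I expect no conceptual obstacle; the one delicate point is the coupling $N^k \asymp m$, since it is exactly this balance that converts the geometric decay $\alpha_{N-1}^k$ into the polynomial rate $m^{-\gamma}$ and pins the exponent to $\gamma = \log_N(1/\alpha_{N-1})$. The care required lies in the constant bookkeeping — the uniform bound $(1-1/m)^m \ge 1/4$ for $m\ge 2$ and the separate treatment of $m=1$ — rather than in any substantive estimate.
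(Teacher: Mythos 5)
Your proposal is correct and follows the same route as the paper: both cases rest on the identical localization $\mu^\alpha\bigl([1-N^{-k},1]\bigr)=\alpha_{N-1}^k$ and the resulting lower bound $I_m\ge (1-N^{-k})^m\alpha_{N-1}^k$. The only difference is cosmetic: where the paper optimizes this bound over $k$ by calculus (locating the critical point $x_0=\log_N(1+m/\gamma)$ and controlling the ratio $f(k_0)/f(x_0)$), you simply take $k=\lceil\log_N m\rceil$ and use $(1-1/m)^m\ge 1/4$, which yields the same order $m^{-\gamma}$ with less bookkeeping.
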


\begin{proof}
Suppose $\alpha_{N-1} = 0$. From the invariance relation in Equation (\ref{invariance}), we observe that the support of $\mu^{\alpha}$ is contained in $[0,1-1/N]$. Then, for any nonnegative integer $m$,
\[
I_m = \int_0^1 x^m\,d\mu^{\alpha}(x) = \int_0^{1 - 1/N}x^m\,d\mu^{\alpha}(x) \leq \left(\dfrac{N-1}{N}\right)^m.  
\]
So the first claim holds.  Now suppose $\alpha_{N-1} > 0$. We assume without loss of generality that $m > \gamma(N-1)$ to establish $C(\alpha) > 0$ which may be adjusted to compensate for the remaining (finitely many) moments. Now note for all positive integers $k$, $\mu^\alpha[1-1/N^{k},1] = (\alpha_{N-1})^k$, so 
\[
I_m = \int_0^1x^m\,d\mu^{\alpha}(x) \geq \int_{1-N^{-k}}^1x^m\,d\mu^{\alpha}(x) \geq (1 - N^{-k})^m(\alpha_{N-1})^k = (1-N^{-k})^m\cdot N^{-k\gamma} = f(k), 
\]
where $f:(0,\infty)\to (0,\infty)$ is defined by $f(x):= N^{-\gamma x} (1-N^{-x})^m$.  In order to maximize this lower bound on $I_m$, we appeal to elementary calculus to first optimize the differentiable function $f$ on $(0,\infty)$ and then select the most optimal positive integer $k$, for a given $m$. From logarithmic differentiation, we find
\begin{align*}
\dfrac{ f'(x) }{ f(x) } &= \left( -\gamma x\log N + m\log\left( 1-N^{-x} \right) \right)' \\
&= -\gamma \log N + m \dfrac{N^{-x}\log N}{1-N^{-x}} \\
&= -\gamma\log N + \dfrac{m\log N}{N^x-1} \\
&= \left( -\gamma + \dfrac{m}{N^x-1} \right)\log N, 
\end{align*}
so $f'$ has its unique zero at $x_0 = \log_N\left( 1+\dfrac{m}{\gamma} \right) > 0$.  In fact, by the assumption on $m$, 
\[
x_0 = \log_N\left( 1+\dfrac{m}{\gamma} \right) > \log_N\left( 1+\dfrac{\gamma(N-1)}{\gamma} \right) = \log_N(N) = 1.
\]
Note that $f'>0$ on $(0,x_0)$ and $f'<0$ on $(x_0,\infty)$, so that $f(x)$ is maximized over $(0,\infty)$ at $x_0$.  Moreover, by monotonicity of $f$ on $(0,x_0)$ and $(x_0,\infty)$ and the fact that $x_0 > 1$, it also follows that the optimal integer is either $\lceil x_0\rceil$ or $\lfloor x_0\rfloor$.  Write $k_0 = k_0(m)$ for the positive integer which maximizes $f$.  We now show that the ratio of $f(x_0)$ and $f(k_0)$ is bounded above and below by constants (depending only on $\alpha$).  Let $\varepsilon = \varepsilon(m) :=k_0-x_0 \in (-1,1)$.  Note that $\gamma(1-N^{-\varepsilon}) \in ( \gamma(1-N), \gamma(1-1/N) )$ and
\begin{align*}
\dfrac{f(k_0)}{f(x_0)} &= N^{-\gamma \varepsilon} \left( \dfrac{ 1-N^{-k_0} }{ 1-N^{-x_0} } \right)^m \\
&= N^{-\gamma\varepsilon } \left( 1+\dfrac{1-N^{-\varepsilon}}{N^{x_0}-1} \right)^m \\
&= N^{-\gamma\varepsilon } \left( 1+\dfrac{\gamma(1-N^{-\varepsilon})}{m} \right)^m \\
&\geq N^{-\gamma\varepsilon} C_1e^{\gamma (1-N^{-\varepsilon})}
\end{align*}
for some $C_1 > 0$ depending only on $N$ and $\gamma$, i.e. $\alpha$. The last inequality follows from observing that sequence of functions $\{(1 + x/m)^m\}$ are positive and converge uniformly to $e^x$ on $(\gamma(1-N), \gamma(1-1/N))$. Further note that
\[
f(x_0) = N^{-\gamma x_0} \left( 1-N^{-x_0} \right)^m 
= \left( 1+\dfrac{m}{\gamma} \right)^{-\gamma} \left( 1-\dfrac{\gamma}{\gamma+m} \right)^m
\geq \left( \dfrac{m}{\gamma}+1 \right)^{-\gamma}e^{ -\gamma }.
\]
Thus, we find the bound
\begin{align*}
I_m &\geq f(k_0) \\
&\geq N^{-\gamma\varepsilon}C_1e^{\gamma(1 - N^{-\varepsilon})}f(x_0) \\
&\geq N^{-\gamma\varepsilon}C_1e^{\gamma(1 - N^{-\varepsilon})}\left( \dfrac{m}{\gamma}+1 \right)^{-\gamma}e^{ -\gamma } \\
&= N^{-\gamma\varepsilon}C_1e^{-\gamma N^{-\varepsilon}}m^{-\gamma}\left( \dfrac{1}{\gamma}+\dfrac{1}{m} \right)^{-\gamma} \\
&\geq N^{-\gamma}C_1e^{-\gamma N}\left( \dfrac{1}{\gamma}+1\right)^{-\gamma}m^{-\gamma}.
\end{align*}

\end{proof}

\begin{rem}
Under the shifted measure $\nu^\alpha$ defined in Remark \ref{shifted MGF}, the moments decay exponentially {\em regardless} of weight vector $\alpha$.  Indeed,
\[
|J_m| = \left|\,\int_{-1/2}^{1/2}x^m\,d\nu^\alpha(x)\right| = \left|\,\int_0^1(x - 1/2)^m\,d\mu^\alpha(x)\right| \leq \left(\dfrac{1}{2}\right)^m.  
\]
\end{rem}

\section{Acknowledgements}
Steven N. Harding was supported in part by the National Science Foundation and the National Geospatial-Intelligence Agency under the NSF award \#1832054. \\
\\Alexander W.~N.~Riasanovsky was supported in part by the ISU Mathematics Department Lambert Research Fellowship.

\end{document}